\def\NAT@def@citea{\def\@citea{\NAT@separator}}
\theoremstyle{plain}
\newtheorem{thm}[theorem]{Theorem}
\newtheorem{propo}[theorem]{Proposition}
\newtheorem{rema}[theorem]{Remark}
\newtheorem{cor}[theorem]{Corollary}
\theoremstyle{definition}
\newtheorem{definition}[theorem]{Definition}
\newtheorem{example}[theorem]{Example}
\theoremstyle{remark}
\begin{document}


\title{Gramian Tensor Decomposition via Semidefinite Programming}

\author{
\name{Erik Skau\textsuperscript{a}\thanks{This research was partly supported by NSF grant CCF-1217557. CONTACT: Erik Skau. Email: ewskau@gmail.com} and Agnes Szanto\textsuperscript{a}}
\affil{\textsuperscript{a}North Carolina State University, Raleigh, NC}
}

\maketitle

\begin{abstract}
In this paper we examine a symmetric tensor decomposition problem, the Gramian decomposition, posed as a rank minimization problem. We study the relaxation of the problem and consider cases when the relaxed solution is a solution to the original problem. In some instances of tensor rank and order, we prove generically that the solution to the relaxation will be optimal in the original. In other cases we present interesting examples and approaches that demonstrate the intricacy of this problem.
\end{abstract}

\begin{keywords}
Symmetric Tensors; Gramian Decomposition; Semidefinite Relaxation; Moment Matrices.
\end{keywords}

\section{Introduction}

\subsection{Background}
Let $\A\in \F^{(n+1)\times \cdots\times (n+1)}$ be a $D$-way, or order $D$, symmetric tensor over a field $\F$ of size $(n+1)\times\cdots\times (n+1)$ ($D$-times). Let  $R:= \F[x_1, \ldots, x_n]$ and let $R_D$ denote the set of polynomials of  degree at most $D$ in $R$. Then we can associate to $\A$ a polynomial 
\begin{eqnarray}\label{qbeta}
p = \sum_{\beta\in \N^{n}, |\beta|\leq D} {{D}\choose{D-|\beta|, \beta_1,\ldots, \beta_n}} p_\beta x^\beta\in R_D
\end{eqnarray}
 by simply multiplying  $\A$ by the vector $[1, x_1, \ldots, x_n]$ from all the $D$ directions. This gives a bijection between symmetric $D$-way tensors over $\F$ and polynomials in $R_D$. 

We define the  {\em symmetric rank} of the tensor $\A$, and the {\em rank} of the polynomial $p$ as follows:

\begin{definition} We say that $\A\in \F^{(n+1)\times\cdots\times (n+1)}$ has {\em symmetric rank} $r$   if there exist  distinct $\bvec{v}_1=(v_{1,0}, v_{1,1}, \ldots, v_{1,n}),$ $ \ldots, $$\bvec{v}_r=(v_{r,0}, v_{r,1}, \ldots, v_{r,n}) \in \overline{\F}^{n+1}$ with coordinates from the algebraic closure $\overline{\F}$ of $\F$, and $\lambda_1, \ldots, \lambda_r\in \overline{\F}/\{0\}$  such that $r$ is minimum and
\begin{eqnarray}\label{Adecomp}
\A=\sum_{t=1}^r\lambda_t\bvec{v_t}^{\otimes D} :=\sum_{t=1}^r\lambda_t\left[  v_{t, i_1}\cdots v_{t, i_D}\right]_{i_1, \ldots, i_D=0}^n. 
 \end{eqnarray}
Equivalently, we say that  $p\in R_D$ has {\em rank} $r$ if 
$r$ is minimal and \begin{eqnarray}\label{decomp}
p = \sum_{t=1}^r \lambda_t L_{\bvec{v}_t}^D,
\end{eqnarray}
where $ L_{\bvec{v}_t} (x_1, \ldots, x_n):=v_{t,0}+ v_{t,1}x_1+\cdots+v_{t,n}x_n$ is the linear form associated to $\bvec{v}_t=(v_{t,0}, v_{t,1},\ldots,v_{t,n})$ for $t=1, \ldots, r$. The expressions in (\ref{Adecomp}) or (\ref{decomp}) are called the {\em rank $r$ symmetric decompositions} of $\A$ and  $p$, respectively.
\end{definition}

There are different, non-equivalent notions of tensor rank in the literature, such as the multilinear rank or non-symmetric rank, etc. (see \cite{ComGolLimMour2008}). Also, one can define the symmetric rank over non-algebraically closed fields, which unlike for matrices, may differ from the above defined symmetric rank  for tensors of order $>2$.  If the field $\F$ is the set of real numbers and the order $D=2d$ is even, we can define the {\em Gramian rank} as follows:

\begin{definition} Let $\A\in \R^{(n+1)\times\cdots\times (n+1)}$ be a real symmetric tensor of order $2d$ and $p\in R_{2d}$  be the corresponding real polynomial.  We say that $\A$ and $p$ is {\em Gramian} with {\em Gramian rank} $r$ 
if there exist  distinct $\bvec{v}_1=(v_{1,0}, v_{1,1}, \ldots, v_{1,n}),$ $ \ldots, $$\bvec{v}_r=(v_{r,0}, v_{r,1}, \ldots, v_{r,n}) \in \R^{n+1}$ and $\lambda_1, \ldots, \lambda_r\in \R_{>0}$ positive real numbers  such that $r$ is minimal and (\ref{Adecomp}) or (\ref{decomp}) holds. The decompositions in (\ref{Adecomp}) and (\ref{decomp}) are called the {\em Gramian decompositions} of $\A$ and  $p$, respectively.
\end{definition}

In this paper we consider the problem of finding the Gramian rank and decomposition for a real symmetric tensor of order $2d$, or equivalently, for a polynomial of degree $2d$. Note that not all polynomials of degree $2d$ are Gramian, in particular, Gramian polynomials are a subset of sum of square polynomials. Hillar and Lim in \cite{HillarLim2013} proved that deciding whether a tensor/polynomial is Gramian is NP-hard even for $d=2$. Also note that even if a tensor is Gramian, its Gramian rank may be much higher than its symmetric rank. 

We give an algorithm that finds the Gramian decomposition in the case when the Gramian rank is sufficiently small. Our approach is to use a relaxation of this problem to semidefinite programming and to show that for sufficiently small Gramian rank $r$ the optimum of the relaxed problem gives a  Gramian decomposition of  length  $r$. This work is a first step to attack the more general problem of finding the symmetric rank and decomposition via semidefinite relaxation. The general case is subject to future research. 

The main results of this paper are as follows:
\begin{itemize}
\item We give a meaningful semidefinite relaxation of the problem of finding the Gramian rank and decomposition of a polynomial $p\in R_{2d}$, assuming that its Gramian rank is sufficiently small. The relaxation becomes a matrix completion problem of moment matrices with minimal trace. 
\item We simplify  and interpret the condition that a given moment matrix is the optimum of our relaxed semidefinite program, using special properties of the dual of the semidefinite program.
\item We analyze special cases when we can guarantee that a given moment matrix is the optimum of the relaxed semidefinite program. In these special cases we point to a connection to the theory of the regularity index of overdetermined polynomial systems. Using this theory we list triples $(n,d,r)$ where we can prove that the optimum of the semidefinite relaxation corresponds to the Gramian decomposition of rank $r$ of a polynomial of degree $2d$ in $n$ variables. 
\end{itemize}

\subsection{Related Work}  Motivation for looking at the  tensor decomposition problem comes from its broad  application areas. The earliest results on tensor decomposition were applications in mathematical physics (\cite{Hitch1927a,Hitch1927b});  psychometrics (\cite{Tucker1966,Tucker1963,CarChan1970,Harshman1970,CaPrKru1980}); algebraic complexity theory (\cite{Knuth69,Strassen69,Krus1977,How1978,Land2006,Land2008});  and in chemometrics (\cite{AppDav1981,Gel1989,SmiBroGel2004}). In higher order statistics, moments and cumulants  are intrinsically tensors (cf.  \cite{McCullagh87}). Symmetric tensor decomposition is proven to be useful in {\em blind source separation} techniques, which are capable
of identifying a linear statistical model only from its outputs (cf. \cite{Com1992}). These blind identification techniques in turn are very popular in numerous applications, including  telecommunication (\cite{SiBroGi2000,GrCoMoTre2002,AlFeCoCh2004}); radar (\cite{ChaCoMu1993});  biomedical engineering (\cite{DeLDeMVan2000});    image and signal  processing (\cite{DeLa1997,GiaHea2000}) just to name a few. An excellent survey of more recent applications of tensor methods can be found in  \cite{HillarLim2013}.  \\

Despite the rich literature on the numerical aspects of the  symmetric tensor decomposition problem, there are relatively small numbers of publications concerned with the symbolic computational aspects of computing the rank of symmetric and non-symmetric tensors. Even though the first algorithm solving the problem  in the bivariate symmetric case goes back to Sylvester \cite{Syl1886},  and several other symbolic algorithms exist in the literature for finding the rank of symmetric tensors (see for example \cite{Reich1992,ComMou96,IaKa1999,BrCoMoTs2009, LanTeit2010,LansOct2010a,LansOct2010b,OedOtta2011}) and non-symmetric tensors (see for example \cite{Strassen83,LathMoorVan2004,AlFeCoCh2004,Lath2006,LanMan2008,OedOtta2011,BerBraComMou2011,BerBarComMou2011b}), they all have strong constrains on the degree $d$, dimension $n$ and/or on the rank $r$. A list of all  cases where we know the  defining equations for (border)-rank $r$ symmetric tensors can be found in \cite{LansOct2010a}. Symmetric rank computation is NP-hard \cite{HillarLim2013}, and its approximation doesn't always exist as the set of rank $r$ tensors is not closed \cite{SilvaLim2008} \\
 
As we will see in the preliminaries below there is a close relationship between the so called truncated moment problem and the Gramian decomposition of tensors.  Here we only mention work that is closest to our problem, namely when representing measures that are finitely atomic. The foundations of the theory and algorithms to study this truncated moment problem were laid down in a sequence of work by Curto and Fialkow in \cite{CurtoFialkow1996,CurtoFialkow1998}, including the so called stopping criteria that we use in this paper. In a series of papers \cite{Lasserre2001,Lasserre2010,LaLaRo2008,LaLaRo2009} the moment problem is connected to polynomial optimization and the solution of polynomial systems over the reals, and our approach is based on this work. The direct relationship between symmetric tensor decomposition and the truncated moment problem was described in the works \cite{BerBarComMou2011b,BrCoMoTs2009}; our approach strongly relies on these results. As we mentioned earlier, in \cite{HillarLim2013} they prove that detecting if a symmetric tensor is Gramian is NP-hard, and they also discussed the relationship between Gramian, non-negative definite tensors, and completely positive matrices. Reznick in \cite{Reznick92} proved that the cone of tensors and of Gramian tensors are dual. It is also proved here that the set of Gramian rank $r$ tensors is closed. In \cite{LimComon2010} they  deduce a computationally feasible condition for  uniqueness using  the notion of coherence. In \cite{LimComon2009} they study nonnegative approximations of nonnegative tensors, where they use a  generalization of the notion of completely positive matrices, which is different from Gramian and nonnegative-definite tensors.   \\

Relaxations of  matrix rank minimization problems using the nuclear norm of matrices was first introduced  in \cite{FaHiBo2001,Fazel2002}.  There is a rich literature on results about the accuracy of the relaxation of a low rank optimization problem using the nuclear norm.  The low rank matrix completion approach assumes that a linear  image of the underlying low rank matrix $M$ is known and attempts to recover the full matrix $M$.  The motivation and  justification for this relaxation is that the nuclear norm of matrices is the convex envelope of the rank function  (cf. \cite{RechFazParr2010}).   The main  results in \cite{CandesRecht2009,CanTao2010,RechFazParr2010}  give  general assumptions which guarantee both the rank minimization problem and its relaxation to have $M$ as its unique solution (with high probability). One of these assumptions in \cite{CandesRecht2009} is the existence of a bound on the so called {\em coherence} of the column and row spaces of the output $M$.  Another such assumption is given for the input. In \cite{RechFazParr2010} they show that if a certain {\em restricted isometry property} holds for the linear transformation defining the constraints, the minimum-rank solution can be recovered by the nuclear norm relaxation. Similar ideas were explored in \cite{NavDeL2009,LiuMusWonYe2009,GaReYa2011,SigDeLSuy2011,GoldfarbQin2014} to recover low {\em multilinear rank} tensors. Here the objective function is the sum of the ranks of the flattenings of the tensor which is subject to linear constrains. This is relaxed by using the sum of the  nuclear norms of the flattenings instead. 
Our approach is closest to the work in   \cite{SCPW2012}, where they study conditions when the semidefinite relaxation solves the minimal rank matrix diagonal completion problem. They reinterpret the dual of the semidefinite relaxation problem in several different ways and connect their original problem to other well-studied problems in statistics and geometry. We follow a similar approach, but leading to very different results.  
\\

\section{Preliminaries}

Before describing  our results, let us give a  brief summary of the main results in the theory of flat extensions of moment matrices (see \cite{BrCoMoTs2009} for more details).  Assume that we have a Gramian decomposition as in (\ref{Adecomp}) or (\ref{decomp})  for some $\bvec{v}_1=(v_{1,0}, v_{1,1}, \ldots, v_{1,n}),$ $ \ldots, $$\bvec{v}_r=(v_{r,0}, v_{r,1}, \ldots, v_{r,n}) \in \R^{n+1}$ and $\lambda_1, \ldots, \lambda_r\in \R_{>0}$. We assume that
$$
v_{1,0}=1, \ldots, v_{r,0}=1
$$
and denote by 
$$\bvec{z}_i=( v_{i,1}, \ldots, v_{i,n})\in \R^{n} \text{ for }i=1, \ldots, r.
$$

Consider the infinite matrix $M$ and its truncation ${M}_{i, j}$ for some $i, j\in \N$ defined by 
 \begin{eqnarray}\label{moment}
M:=\left[ m_{\beta+\beta'}\right]_{\beta, \beta' \in \N^{n}}  \text{ and } {M}_{i, j}:=\left[ m_{\beta+\beta'}\right]_{|\beta|\leq i,|\beta'|\leq j},
\end{eqnarray}
where for $\alpha\in \N^{n}$
$$
m_{\alpha}=\sum_{t=1}^r \lambda_t \bvec{z}_i^{\alpha}, 
$$
denotes the {\em moments} corresponding to the points $\{\bvec{z}_1,\ldots, \bvec{z}_r\}$. If $i=j$ we will denote ${M}_{i, i}$ by simply $M_i$. These matrices have  so called quasi-Hankel structure (see \cite{mourrain1996multidimensional}), and  called {\em moment matrices}, i.e.  they are   matrices whose rows and columns are indexed by monomials and the  entries depend only  on the product of the indexing monomials. 

Let   $V:= [ \bvec{z}_i^\beta]_{i=1, ..r, \beta\in \N^n}$ be the Vandermonde matrix with infinitely many columns, its truncation $V_i:= [ \bvec{z}_i^\beta]_{i=1, ..r, \beta\in \N^n, |\beta|\leq i}$, and let $\Lambda:={\rm diag} (\lambda_1, \ldots, \lambda_r)$. Then we have 
\begin{eqnarray}\label{Vandermonde}
M=V^T\Lambda V\quad {M}_{i, j}= V_i^T\Lambda V_{j}.
\end{eqnarray}

When we only know the tensor $\A$ or the polynomial $p$ as in (\ref{qbeta}) for $D=2d$, but not the decomposition,  
from (\ref{decomp}) it is easy to see that for $|\beta+\beta'|\leq 2d$ we have the following relationship between the entries of the moment matrix $M$ and the coefficients of $p$:
\begin{eqnarray}\label{m-p}
[M]_{\beta, \beta'}=m_{\beta+\beta'}={{2d}\choose {\beta+\beta'}}^{-1} p_{\beta+\beta'}\quad |\beta+\beta'|\leq 2d.
\end{eqnarray} 
The truncations $M_{i,j}$ for $i+j\leq 2d$, are called {\em catalecticant matrices}, and its theory  goes back to  Sylvester in \cite{Syl1886}. 
Note that for $i=j=d$, $M_d$ is a  symmetric matrix of size
$
 {{n+d}\choose d} = \dim R_d.
$

Next we define the notion of  flat extensions of moment matrices:

\begin{definition}
Given $M_D$ a moment matrix for some degree $D\geq 0$ as in (\ref{moment}). We call an infinite moment matrix $M$  an {\em   extension} of $M_D$  if $$[M]_{\beta,\beta'} = [M_D]_{\beta,\beta'} \text { for } |\beta+\beta'|\leq D. $$
If, in addition,
$$
 \rank{M}=\rank{M_D}, 
$$ then we say that $M$ is a {\em flat extension} of $M_D$.   Furthermore, if $M$ is positive semidefinite, we call $M$ a {\em  Gramian  flat extension} of $M_D$. 
\end{definition}

Clearly, if  $p\in R_{2d} $ has symmetric rank $r$, then there exists at least one infinite moment matrix $M$ of rank $r$ that extends $M_d$. Similarly, if $p$ has Gramian rank $r$ then there exists some positive semidefinite moment matrix  $M$  of rank $r$ that extends $M_d$. If, in addition, $M_d$ also has rank $r$, then $M$ is a Gramian flat extension of $M_d$.     Note that if the  decomposition of $p$ is not unique, then the flat extensions of $M_d$ may not be unique either. 
The converse is not entirely true: if $M_d$ has an infinite  flat extension  $M$ of  rank $r$, then $p$ has   a so called {\em generalized decomposition}, where the points $\{\bvec{z}_t\}_{t=1}^r$ may be repeated (see \cite{BerBraMou2014} for more details).  However, for a positive semidefinite flat extension, the corresponding points in the decomposition are always distinct. Thus, these positive semidefinite flat extensions always correspond to a Gramian decomposition of the tensor \cite{CurtoFialkow1996}.

In \cite{CurFial2005,BrCoMoTs2009,BerBarComMou2011b}  they  give conditions for the existence of a (Gramian) flat extension  in terms of finite truncations of $M$:\\
 
\begin{thm}[{\sc Stopping criterion for flat extension}]\label{thm:stop}  Let  $M_d$ be a moment matrix as above. Let  $M$ be an infinite extension of $M_d$ as above.   $M$ has rank $r$ if and only if  there exist  $D\geq 0$ such that $$
\mathrm{rank} (M_D)=\mathrm{rank} (M_{D+1})=r. 
$$
 If, in addition,  $M_{D+1}$ is positive semidefinite, then  $M$ is also positive semidefinite. We call $M_{D+1}$ a {\em truncated (Gramian) flat extension} of $M_D$. 
 \end{thm}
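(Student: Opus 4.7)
The plan is to prove the two directions separately, starting with the easy forward implication and then handling the main content in the converse. Throughout, I would exploit the quasi-Hankel structure of $M$, namely that $[M]_{\beta,\beta'}$ depends only on $\beta+\beta'$.

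For the forward direction, suppose $M$ has rank $r$. The truncations $M_D$ are principal submatrices of $M$, so $\mathrm{rank}(M_D)\leq \mathrm{rank}(M_{D+1})\leq r$ for every $D$. Since the column space of $M$ has dimension $r$, it is spanned by finitely many columns, all of which are indexed by monomials of some bounded degree $D_0$. Hence $\mathrm{rank}(M_D)=r$ for all $D\geq D_0$, giving the desired $D$ with $\mathrm{rank}(M_D)=\mathrm{rank}(M_{D+1})=r$.

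For the converse, assume $\mathrm{rank}(M_D)=\mathrm{rank}(M_{D+1})=r$. First I would select a basis $B=\{x^{\beta_1},\ldots,x^{\beta_r}\}$ of the column space of $M_D$, with all $|\beta_i|\leq D$. Since $\mathrm{rank}(M_{D+1})$ is also $r$, every column of $M_{D+1}$ indexed by a monomial of degree $\leq D+1$ is a linear combination of the columns indexed by $B$. In particular, for each $i=1,\ldots,n$ and each $j=1,\ldots,r$, the column indexed by $x_i x^{\beta_j}$ has a representation $x_i x^{\beta_j}\equiv \sum_k c^{(i)}_{jk} x^{\beta_k}$ on the column space. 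This yields multiplication operators $X_1,\ldots,X_n$ on $\mathbb{R}^r$. The crucial step is to use the quasi-Hankel structure together with the flatness $\mathrm{rank}(M_D)=\mathrm{rank}(M_{D+1})$ to show that these operators pairwise commute, so that the relations propagate consistently to all monomials by induction on degree. This extends $M_{D+1}$ uniquely to an infinite quasi-Hankel matrix $M$ whose column space remains $r$-dimensional, hence $\mathrm{rank}(M)=r$. The commutativity argument (essentially that $X_i X_j$ and $X_j X_i$ agree because both compute the column indexed by $x_i x_j x^{\beta_k}$, which lies in $M_{D+1}$) is the main obstacle and the technical heart of the proof.

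For the positive semidefinite addendum, assume $M_{D+1}\succeq 0$ of rank $r$. The common eigenvectors of the commuting operators $X_1,\ldots,X_n$ constructed above produce $r$ real points $\bvec{z}_1,\ldots,\bvec{z}_r\in\R^n$, and PSD-ness of $M_{D+1}$ supplies positive weights $\lambda_1,\ldots,\lambda_r$ such that $M_{D+1}$ equals the atomic moment matrix $V_{D+1}^T\Lambda V_{D+1}$ of these atoms, as in (\ref{Vandermonde}). The unique quasi-Hankel extension $M$ must then coincide with $V^T\Lambda V$, which is PSD, proving the final claim. I would cite \cite{CurtoFialkow1996,BrCoMoTs2009} for the standard technicalities of the propagation and eigenvector extraction steps rather than reproduce them in full.
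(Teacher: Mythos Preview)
The paper does not prove this theorem; it is stated as a known result with citations to \cite{CurFial2005,BrCoMoTs2009,BerBarComMou2011b} (see the sentence immediately preceding the theorem). There is therefore no in-paper proof to compare against. Your sketch follows the standard argument from those references---the Curto--Fialkow flat extension mechanism reformulated via commuting multiplication operators on the column space, as in \cite{BrCoMoTs2009,LaLaRo2008}---and your identification of the commutativity of the $X_i$ as the technical heart, together with your plan to cite the literature for the propagation and eigenvector-extraction details, is exactly how the paper itself treats the result.
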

 
 Note that once the above stopping criterion is satisfied, one can compute a system of multiplication matrices  from the kernel of $M_{D+1}$, and the coordinates of the points $\bvec{z}_i$ for $i=1, \ldots, r $ can be read out from the eigenvalues of these  multiplication matrices \cite{CurFial1996,LaLaRo2008,LaLaRo2009}.
 
 In the present paper we assume that $p\in R_{2d}$ has Gramian rank $r$ satisfying
 $$
{\rm size}(M_{d-1})< r\leq  {\rm size}(M_{d})
$$ and  $M_d$ has a truncated Gramian flat extension $M_{d+1}$  of rank
 $$
\mathrm{rank} (M_{d})=\mathrm{rank} (M_{d+1})=r,
$$
i.e. $D=d$ in the stopping criterion above. 

However, given $p\in R_{2d}$, we only know the entries of  $M_d$, so  we want to find a  truncated Gramian flat extension $M_{d+1}$. Note that if $r\leq {\rm size}(M_{d-1})$ then by the stopping criterion we do not need to extend the matrix $M_d$ to find the Gramian rank.
So the {\em truncated Gramian  flat extension problem} that we attempt to solve in this paper is the following:\\

\begin{definition}[{\sc  Truncated Gramian flat extension problem}]\label{TNFEP} Given $p\in R_{2d}$ as in (\ref{qbeta}) with non-zero constant term. Assume that the corresponding truncated moment matrix $M_d$ given by (\ref{m-p}) has rank $r$ and is positive semidefinite. Find a positive semidefinite moment matrix extension $M_{d+1}$ of $M_d$ which has rank $r$, if one exists.  Equivalently, find a minimal rank positive semidefinite extension $M_{d+1}$ of $M_d$.
 \end{definition}

Unfortunately, the minimal rank optimization problem is NP-hard, and all known algorithms which provide exact solutions are double exponential in the dimension of the matrix (cf. \cite{CandesRecht2009}).  However, relaxation techniques were successfully applied for  ``low rank matrix completion''  or  ``affine rank minimization'' problems that are very similar in structure to our problem. Namely, the constraints on the extension matrix $M_{d+1}$ are all linear equalities. These relaxation techniques  replace  rank minimization by the minimization of the nuclear norm of the matrix. Recall that the nuclear norm of a matrix $M$ is defined by
\vspace{-3mm} $$
\|M\|_*:= \sum_{i=1}^r \sigma_i,
\vspace{-3mm} $$
where $\sigma_1>\sigma_2>\cdots >\sigma_r>0$ are the non-zero singular values of $M$. The advantage is that the nuclear norm is a convex function and can be optimized efficiently using semidefinite programming. Note that when $M$ is positive semidefinite then 
$$
\|M\|_*= {\rm trace} (M).
$$

\begin{definition}[{\sc  Relaxation of truncated Gramian flat extension}] \label{RTNFE} Given  $p\in R_{2d}$ with non-zero constant term, find a positive semidefinite moment matrix $M_{d+1}$ satisfying   $[M_{d+1}]_{\beta,\beta'}={{2d}\choose {\beta+\beta'}}^{-1} p_{\beta+\beta'}$ for $|{\beta+\beta'}|\leq 2d$,  and ${\rm trace} (M_{d+1})$ is minimal.
\end{definition}

The purpose of this paper is to prove that for sufficiently low Gramian rank $r$, the optimum of the relaxation is the minimal rank solution.

In \cite{CandesRecht2009,CanTao2010,RechFazParr2010} the goal of the low rank matrix completion and affine rank minimization problems is to give conditions on the matrix and on the linear constraints  so that the optimum of the minimal rank problem is unique and equal to the optimum of the nuclear norm relaxation. In our case uniqueness cannot always be expected, since symmetric tensors can have many minimal decompositions, resulting in different flat extensions of the same rank.   For example, if $r$ is the generic rank as in \cite{AlHir1995},   \cite{Mella2009} conjectures that the solution is never unique, except for  three cases.  The lack of uniqueness is a significant obstacle for the relaxation to find the minimal rank solution as the set of minimal rank decompositions may be a non-convex object. For this reason we cannot expect to find the minimal rank decomposition via semidefinite optimization. To address this obstacle we constrain ourselves to cases where the minimal decomposition of the symmetric tensor is essentially unique (up to unimodulus scaling).

For symmetric decompositions rather strong uniqueness results were proved in \cite{IaKa1999,ChiCil2006,Mella2009}. Namely, for a decomposition as in (\ref{decomp}),  if $d \geq 2$, and 
\begin{eqnarray}
r\leq {{d+n}\choose{d}}-n+1 = \dim R_{d} -n+1
\vspace{-3mm}\end{eqnarray}
then the decomposition is essentially unique, as long as the points $\{\bvec{ z}_i\}_{i=1}^r$ are in general position (cf.  \citep[Th.2.6][]{IaKa1999}).  Our ultimate goal would be to prove that in the cases of  unique decomposition, the semidefinite relaxation gives the minimal rank solution.  At this point we could only prove a small portion of these cases, however, in the process we uncovered some interesting connections of this problem to the theory of the regularity index of polynomial systems, which is an active research area in mathematics.

A difference between our problem and the ones considered in \cite{CandesRecht2009,CanTao2010,RechFazParr2010} is that the linear constraints on the extension $M_{d+1}$ are not given at random, and we cannot expect that the corresponding linear map  would satisfy  either the restricted isometry conditions of \cite{RechFazParr2010} or the injectivity when restricted to  the tangent space of rank $r$ matrices at the optimum as in \cite{CandesRecht2009}.  Thus to tackle our problem we needed new ideas. As we mentioned in the Introduction,  our approach is closest to the one in \cite{SCPW2012}, where they give equivalent interpretations for the dual of the relaxed semidefinite program, discovering interesting connections of  the original problem to other problems in geometry and statistics that were previously studied.

\section{Relaxation and Dual Problem}
Given $d \in \N$,   $n\geq 1$,  and $p\in R_{2d}$ as in (\ref{qbeta}), and let
$$
M_d = \left[m_{\beta+\beta'}\right]_{\substack{\beta, \beta'\in \N^n\\ |\beta+\beta'|\leq 2d}}
$$ 
be the corresponding truncated moment matrix as in (\ref{m-p}) with moments $m_\alpha={{2d}\choose {\alpha}}^{-1} p_{\alpha}$ for $|\alpha|\leq 2d$. Denote by  $$N:=  \dbinom{n+d+1}{n}=\dim R_{d+1},
$$ and by ${\mathcal S}_N$ the space of real symmetric matrices of size $N$. The truncated Gramian flat extension problem in Definition \ref {TNFEP} is finding a symmetric  matrix $X\in {\mathcal S}_N$, with columns and rows indexed by $\alpha, \beta\in \N^n$, such that
\begin{align*}
\min & \hspace{5mm} \rank{X}\\
\st & \hspace{5mm}
\begin{cases}
[X]_{ \beta, \beta'} = m_{\beta+\beta'}\ &\text{for} \ \vert \beta+\beta' \vert \leq 2d\\
[X]_{\beta ,\beta'} - [X]_{\gamma,\gamma'} = 0 \ &\text{if} \ \beta+\beta' = \gamma + \gamma'\\
X \succeq 0
\end{cases}\\
\end{align*}

Using the bilinear form 
$$
<A,B>:=Tr(A \cdot B), 
$$ we choose an orthonormal basis for the space of  symmetric matrices ${\mathcal S}_N$ as specified in Definition \ref{orth}. 

\begin{definition}[{\sc Choice of Orthogonal basis for ${\mathcal S}_N$}] \label{orth} For each $\alpha\in \N^n$ such that $|\alpha|\leq 2d+2$, we define the subspace ${\mathcal S}_\alpha\subset {\mathcal S}_N$ of symmetric matrices with support indexed by the set of pairs $\{(\gamma,\delta)\in \left(\N^{n}\right)^2 \;:\; \gamma + \delta = \alpha\}$. Fix $Y_\alpha \in {\mathcal S}_\alpha$ to be the {\em moment matrix} which has $1$ at each entry in its support. Then choose an arbitrary orthonormal basis  $\{ Z_{\alpha, i}\;:\; 1\leq i\leq\dim {\mathcal S}_\alpha-1\} \subset {\mathcal S}_\alpha $ for the subspace of ${\mathcal S}_\alpha$ orthogonal to $Y_\alpha$. 
\end{definition}
\begin{example}
For example, in the univariate case with a monomial basis of $1, x, x^2$ we define an orthogonal decomposition of $\mathcal{S}_3$:\\
$Y_0 = \begin{bmatrix}
1 &&\\
&&\\
&&
\end{bmatrix}, Y_1 =\begin{bmatrix}
&1&\\
1&&\\
&&
\end{bmatrix}, Y_2 =\begin{bmatrix}
&&1\\
&1&\\
1&&
\end{bmatrix}, Y_3 =\begin{bmatrix}
&&\\
&&1\\
&1&
\end{bmatrix}, Y_4 =\begin{bmatrix}
&&\\
&&\\
&&1
\end{bmatrix}$\\
One can easily see that this set is a basis for all 3 by 3 Hankel matrices. We also see that with our monomial list there are two ways to obtain $x^2 = x^2 \cdot 1 = x \cdot x$ so we then define one matrix orthogonal to $Y_2$ with respect to our inner product and with the same support $Z_2 =\begin{bmatrix}
&&-1\\
&2&\\
-1&&
\end{bmatrix}.$
\end{example}
Using this notation we rewrite the truncated Gramian flat extension problem as follows:

\begin{align*}
\min & \hspace{5mm} \rank{X}\\
\st & \hspace{5mm}
\begin{cases}
<Y_{\alpha},X>=m_{\alpha}\quad |\alpha|\leq 2d\\
<Z_{\alpha,i},X>=0\quad |\alpha|\leq 2d+2, \quad 1 \leq i \leq \operatorname{dim}({\mathcal S}_\alpha)-1\\
X \succeq 0
\end{cases},
\end{align*}

This we relax to a semidefinite program:
\begin{align*}
\min & \hspace{5mm} <I,X>\\
\st & \hspace{5mm}
\begin{cases}
<Y_{\alpha},X>=m_{\alpha}\quad |\alpha|\leq 2d\\
<Z_{\alpha,i},X>=0\quad |\alpha|\leq 2d+2, \quad 1 \leq i \leq \operatorname{dim}({\mathcal S}_\alpha)-1\\
X \succeq 0
\end{cases},
\end{align*}

Thus we get the following primal and dual semidefinite optimization problems (in standard form):

$$\begin{array}{c|c}
Primal & Dual \\
\hline
\min_{X} <I,X> & \max_{(\bvec{y},\bvec{z},S)} \sum m_\alpha  y_\alpha\\ 
\st \begin{cases}
<Y_{\alpha}, X> = m_{\alpha}\\
<Z_{\alpha,i}, X> = 0\\
X \succeq 0
\end{cases} &
\st \begin{cases}
S= I-\sum y_{\alpha} Y_{\alpha} -\sum z_{\alpha,i} Z_{\alpha,i} \\
S \succeq 0
\end{cases}
\end{array},$$
where the indices of $y_\alpha$ and $Y_\alpha$ run through $ |\alpha|\leq 2d$, while the indices of $z_{\alpha, i}$ and  $Z_{\alpha,i}$ run through $|\alpha|\leq 2d+2$ and  $1 \leq i \leq \operatorname{dim}({\mathcal S}_\alpha)-1$, using the notation of Definition \ref{orth}.\\

 In the rest of this paper we will use the following notation for the above semidefinite programs:
\begin{align*}
(\mathcal{P}) \; &: \; \text{primal problem in standard form;}\\
(\mathcal{D}) \; &: \; \text{dual problem in standard form;}\\
\mathcal{P} \; &: \; \text{feasible set of problem ($\mathcal{P}$);}\\
\mathcal{D} \; &: \; \text{feasible set of problem ($\mathcal{D}$);}\\
\mathcal{P}^* \; &: \; \text{optimal set of problem ($\mathcal{P}$);}\\
\mathcal{D}^* \; &: \; \text{optimal set of problem ($\mathcal{D}$).}\\
\end{align*}

\section{Certificate of Optimality}

Assume that we are given a Gramian decomposition of $p\in R_{2d}$ 
\begin{eqnarray*}
p = \sum_{i=1}^r \lambda_i (1+ v_{i,1}x_1+\cdots+v_{i,n}x_n)^{2d},
\end{eqnarray*}
corresponding to the points $\bvec{z}_i=(v_{i,1}, \ldots, v_{i, n})\in \R^n$ and $\lambda_i>0$ for $ i=1,\ldots, r$. Using the Vandermonde matrix $V_{d+1}$ of the points $\{{\bf z}_1,\ldots, {\bf z}_r\}$  and $\Lambda={\rm diag}(\lambda_1, \ldots, \lambda_r)$ as in  (\ref{Vandermonde}), it is clear that  $M_{d+1}=V_{d+1}^T \Lambda V_{d+1}$ is in the feasible set, $\mathcal{P}$. Our goal is to give conditions that guarantee that $M_{d+1}$ is in the set of optimal solutions, $\mathcal{P}^*$. To get such conditions we use both $(\mathcal{P})$ and $(\mathcal{D})$ defined above.

One can see that  $(\mathcal{D})$ is strictly feasible with $S=I$ and its optimum is bounded above by  ${\rm trace}(M_{d+1})$ since $M_{d+1}=V_{d+1}^T \Lambda V_{d+1}$ as a feasible solution for $(\mathcal{P})$. This implies that there is no duality gap between the optimal values of  $(\mathcal{P})$ and $(\mathcal{D})$, although $(\mathcal{D})$ might not attain its optimum \cite{vandenberghe1996semidefinite}. However, if we can construct a feasible pair $X\in\mathcal{P}$ and $(y, z, S)\in \mathcal{D}$  such that  $$<X,S>=0$$
then we must have $X\in \mathcal{P}^*$ and $(y, z, S)\in \mathcal{D}^*$ since  $$0=<X,S>= <I,X> - \mathbf{m}^T \mathbf{y},$$
which implies optimum by weak duality. Note that for positive semidefinite matrices $X$ and $S$ we have  $$<X, S> = 0 \iff XS=0.$$

Thus we get the following theorem: 

\begin{thm}\label{thm1}
The  moment matrix $M_{d+1} = V_{d+1}^T \Lambda V_{d+1}$  is optimal for $(\mathcal{P})$, or $M_{d+1} \in \mathcal{P}^*$, if there exists $ S \in {\mathcal S}_N$ such that:
\begin{align*}
&M_{d+1}S = 0,\\
&S= I-\sum_{ |\alpha| \leq 2d} y_{\alpha} Y_{\alpha} -\sum_{\substack{ |\alpha| \leq 2d+2, \\1 \leq i \leq \operatorname{dim}({\mathcal S}_\alpha)-1}} z_{\alpha,i} Z_{\alpha,i},\\
&S \succeq 0\\
\end{align*}
\end{thm}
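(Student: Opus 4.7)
The plan is to prove Theorem \ref{thm1} by a standard complementary slackness / weak duality argument for semidefinite programs. There are essentially three things to check: that $M_{d+1}$ is primal feasible, that any $S$ satisfying the three stated conditions extends to a dual feasible triple, and that the inner product $\langle I, M_{d+1}\rangle$ equals the dual objective $\sum m_\alpha y_\alpha$ at that point, so that optimality of both follows by weak duality.

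First I would check primal feasibility of $M_{d+1} = V_{d+1}^T\Lambda V_{d+1}$. Since $\Lambda\succ 0$, this matrix is positive semidefinite. Because it is the moment matrix of the measure $\sum \lambda_i\delta_{\bvec{z}_i}$, its entries depend only on $\beta+\beta'$, which encodes exactly the Hankel-type identities captured by the $Z_{\alpha,i}$; hence $\langle Z_{\alpha,i}, M_{d+1}\rangle = 0$ for every $\alpha, i$. Moreover, by the construction in (\ref{m-p}), the entries with $|\beta+\beta'|\leq 2d$ agree with $m_{\beta+\beta'}$, which is exactly the statement $\langle Y_\alpha, M_{d+1}\rangle = m_\alpha$ for $|\alpha|\leq 2d$. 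So $M_{d+1}\in\mathcal{P}$. Dual feasibility of $(y, z, S)$ is immediate from the hypotheses.

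Next, I would compute $\langle I, M_{d+1}\rangle$ by expanding $I = S + \sum y_\alpha Y_\alpha + \sum z_{\alpha,i} Z_{\alpha,i}$ using the dual constraint:
\begin{align*}
\langle I, M_{d+1}\rangle
&= \langle S, M_{d+1}\rangle + \sum_{|\alpha|\leq 2d} y_\alpha \langle Y_\alpha, M_{d+1}\rangle + \sum_{\alpha,i} z_{\alpha,i}\langle Z_{\alpha,i}, M_{d+1}\rangle.
\end{align*}
The second sum equals $\sum m_\alpha y_\alpha$ and the third sum vanishes by the primal feasibility check. The first inner product also vanishes: $M_{d+1}\succeq 0$, $S\succeq 0$, and the hypothesis $M_{d+1}S=0$ give $\operatorname{tr}(M_{d+1}S)=\langle M_{d+1},S\rangle=0$. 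Therefore $\langle I, M_{d+1}\rangle = \sum m_\alpha y_\alpha$.

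Finally, by weak duality, every $X\in\mathcal{P}$ satisfies $\langle I, X\rangle \geq \sum m_\alpha y_\alpha = \langle I, M_{d+1}\rangle$, which proves $M_{d+1}\in\mathcal{P}^*$ (and simultaneously $(y,z,S)\in\mathcal{D}^*$). There is no real obstacle here since the argument is the textbook complementary slackness certificate for SDP; the only subtlety worth pointing out explicitly is that the orthogonal basis $\{Y_\alpha, Z_{\alpha,i}\}$ in Definition \ref{orth} was chosen precisely so that moment matrices are orthogonal to every $Z_{\alpha,i}$, which is what lets the Hankel-type constraints drop out cleanly in the expansion of $\langle I, M_{d+1}\rangle$.
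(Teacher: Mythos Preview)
Your proposal is correct and follows essentially the same weak duality / complementary slackness argument that the paper gives in the paragraph immediately preceding Theorem \ref{thm1}: feasibility of $M_{d+1}$ and $(y,z,S)$, the expansion $\langle M_{d+1},S\rangle=\langle I,M_{d+1}\rangle-\sum m_\alpha y_\alpha$, and the equivalence $M_{d+1}S=0\Leftrightarrow\langle M_{d+1},S\rangle=0$ for PSD matrices. Your write-up is in fact a bit more explicit than the paper about why $\langle Z_{\alpha,i},M_{d+1}\rangle=0$ holds, which is a nice touch.
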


Using Theorem \ref{thm1} we study when the optimal solution of $(\mathcal{P})$ is unique and $\mathcal{P}^* = \{M_{d+1}\}$. We are only concerned with cases where the rank $r$ symmetric decomposition of the associated polynomial $p$ is unique, and it is Gramian. In this case, Proposition \ref{unique} gives sufficient conditions to show $\mathcal{P}^* = \{M_{d+1}\}$: 

\begin{propo}\label{unique}
Assume that $p$ has Gramian rank $r$ and  the rank $r$ symmetric decomposition of $p$ is unique. If $\exists S$ satisfying Theorem \ref{thm1} of rank $N-r$, then $\mathcal{P}^* = \{M_{d+1}\}$.
\end{propo}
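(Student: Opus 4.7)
The plan is to use semidefinite programming complementary slackness combined with the stopping criterion (Theorem \ref{thm:stop}) and the uniqueness hypothesis. First, I would observe that the matrix $S$ supplied by Theorem \ref{thm1}, packaged with its defining coefficients $(y_\alpha, z_{\alpha,i})$, is a dual feasible point for which $\langle M_{d+1}, S\rangle = 0$. Since $(\mathcal{D})$ is strictly feasible ($S = I$ works, as noted in the paper), strong duality holds, the duality gap vanishes, and the triple $(y,z,S)$ lies in $\mathcal{D}^*$. For any other primal optimum $X^* \in \mathcal{P}^*$, weak duality then yields $\langle X^*, S\rangle = 0$, and because both matrices are positive semidefinite, this forces $X^* S = 0$.

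Next I would run a rank count. From $X^* S = 0$, the column space of $X^*$ sits inside $\ker S$, which has dimension $N - \mathrm{rank}(S) = r$, so $\mathrm{rank}(X^*) \leq r$. On the other hand, the feasibility constraints of $(\mathcal{P})$ force the leading $(\dim R_d) \times (\dim R_d)$ principal submatrix of $X^*$ to coincide with $M_d$, and the standing assumption of the truncated Gramian flat extension problem gives $\mathrm{rank}(M_d) = r$. Hence $\mathrm{rank}(X^*) = r$ and, crucially, $\mathrm{rank}(X^*_d) = \mathrm{rank}(X^*_{d+1}) = r$, which is precisely the hypothesis of Theorem \ref{thm:stop} applied to $X^*$ with $D = d$.

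The stopping criterion then produces a positive semidefinite infinite flat extension of $X^*$ of rank $r$, which in turn yields a Gramian decomposition of $p$ of length $r$ (as discussed after Theorem \ref{thm:stop}, PSD flat extensions always correspond to decompositions with distinct real points and strictly positive weights). By the uniqueness hypothesis on the rank $r$ symmetric decomposition of $p$, this Gramian decomposition must coincide with the original $\{(\bvec{z}_i,\lambda_i)\}_{i=1}^r$, so the resulting infinite moment matrix is $V^T \Lambda V$ and its size-$N$ truncation is $M_{d+1} = V_{d+1}^T \Lambda V_{d+1}$. Consequently $X^* = M_{d+1}$, giving $\mathcal{P}^* = \{M_{d+1}\}$.

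The step I expect to be the most delicate is the bookkeeping at the interface of the SDP rank bound with the moment-theoretic uniqueness: one must verify that the principal block $M_d$ of $X^*$ really has rank $r$ (so that Theorem \ref{thm:stop} fires at $D=d$ rather than some larger $D$), and that uniqueness of the decomposition of $p$ is strong enough to force equality of two rank $r$ PSD moment matrices extending $M_d$, not merely equality of the underlying point sets with positive weights. Once these points are cleanly recorded, the remaining arguments reduce to a short application of complementary slackness plus the stopping criterion.
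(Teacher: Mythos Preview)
Your proof is correct and follows essentially the same route as the paper: complementary slackness forces $X^*S=0$ for any primal optimum $X^*$, the rank bound $\mathrm{rank}(S)=N-r$ yields $\mathrm{rank}(X^*)\leq r$, and the stopping criterion together with the uniqueness hypothesis pins $X^*$ down as $M_{d+1}$. The paper's own proof is considerably terser (it writes the whole argument in three sentences and leaves the complementary slackness and the verification that $\mathrm{rank}(M_d)=r$ implicit from the standing assumptions), whereas you spell out each of these steps and flag the interface with Theorem~\ref{thm:stop} explicitly; but the underlying argument is the same.
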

\begin{proof}
Suppose $p$ has a unique Gramian rank $r$ decomposition and let $S$ be a matrix satisfying Theorem \ref{thm1} of rank $N-r$. Let $M \in \mathcal{P}^*$. Since $MS=0$ and $\rank{ S}=N-r$, we have  $\rank M\leq r$. But by the stopping criteria in Theorem \ref{thm:stop}, $M$ defines a rank $\leq r$ symmetric decomposition for $p$, so the uniqueness of the symmetric decomposition implies that $M=M_{d+1}$.\end{proof}

Additionally we note the following  about the set of matrices satisfying Theorem \ref{thm1}.

\begin{propo}\label{prop:rank}
If $\exists S$ satisfying Theorem \ref{thm1}, then $\exists \bar{S}$ satisfying Theorem \ref{thm1} with $\rank{\bar{S}} \leq \dim R_{d+1}-\dim R_d = {{n+d}\choose{d+1}}$.
\end{propo}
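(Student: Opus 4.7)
The plan is to reduce the rank of $S$ one unit at a time by subtracting rank-one PSD corrections of the form $vv^T$, where $v$ is a polynomial of degree at most $d$ that lies in both $\mathrm{range}(S)$ and $\ker M_{d+1}$; such subtractions preserve all three conditions of Theorem \ref{thm1}, and the process can be iterated until the rank drops below the required bound.

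First I would identify coefficient vectors in $\R^N$ with polynomials in $R_{d+1}$, so that rows and columns of matrices in $\mathcal{S}_N$ are indexed by monomials of degree at most $d+1$. The PSD condition $S \succeq 0$ combined with $M_{d+1}S=0$ forces $\mathrm{range}(S) \subseteq \ker M_{d+1}$. Let $\rho = \rank{S}$, and assume $\rho > \binom{n+d}{d+1}$ (otherwise $\bar S:=S$ already works). Consider the canonical surjection $\pi : R_{d+1} \to R_{d+1}/R_d$; by Pascal's identity its target has dimension $\binom{n+d+1}{d+1}-\binom{n+d}{d}=\binom{n+d}{d+1}$. Since $\dim \mathrm{range}(S) = \rho$ exceeds this, the restriction $\pi|_{\mathrm{range}(S)}$ has nontrivial kernel, so one may pick a nonzero $v \in \mathrm{range}(S) \cap R_d$.

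Next I would form $\bar S := S - t^* vv^T$, where $t^* > 0$ is the largest value of $t$ for which $S - tvv^T \succeq 0$; such a finite positive $t^*$ exists precisely because $v$ is a nonzero element of $\mathrm{range}(S)$. At $t = t^*$ the matrix $\bar S$ acquires at least one new direction in its kernel, so $\rank{\bar S} \leq \rho - 1$. Two verifications confirm that $\bar S$ still satisfies Theorem \ref{thm1}. First, $M_{d+1}\bar S = M_{d+1}S - t^*(M_{d+1}v)v^T = 0$, since $v \in \ker M_{d+1}$. Second, for every $|\alpha|>2d$,
\[
\langle Y_\alpha, vv^T\rangle \;=\; \sum_{\beta+\beta'=\alpha} v_\beta v_{\beta'} \;=\; [v^2]_\alpha \;=\; 0,
\]
because $v \in R_d$ forces $v^2 \in R_{2d}$. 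Hence $\langle Y_\alpha, \bar S\rangle = \langle Y_\alpha, S\rangle$ for all $|\alpha|>2d$, which is exactly what is needed for $\bar S$ to admit a dual-feasible expansion $I - \sum y_\alpha Y_\alpha - \sum z_{\alpha,i} Z_{\alpha,i}$ with the index ranges prescribed in Theorem \ref{thm1} (but with adjusted coefficients).

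Iterating this rank reduction strictly decreases the rank at each step, so after at most $\rho - \binom{n+d}{d+1}$ passes one obtains $\bar S$ satisfying Theorem \ref{thm1} with $\rank{\bar S} \leq \binom{n+d}{d+1}$. The main subtle point, and the place where the moment-matrix structure is used, is checking that the rank-one subtraction does not disturb the affine constraints hidden inside the dual-feasibility form $S = I - \sum y_\alpha Y_\alpha - \sum z_{\alpha,i}Z_{\alpha,i}$; this rests on the elementary degree estimate $\deg(v^2)\leq 2d$, which is precisely why $v$ must be drawn from the low-degree slice $\mathrm{range}(S) \cap R_d$ rather than from an arbitrary vector in $\mathrm{range}(S)$.
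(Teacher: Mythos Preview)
Your argument is correct. It differs from the paper's proof, which is a one-line Schur complement argument: writing $S$ in block form according to the partition of monomials into degrees $\le d$ and $=d+1$, the paper replaces the upper-left block so that the Schur complement with respect to the lower-right block vanishes; the resulting matrix has rank at most the size of that block, namely $\binom{n+d}{d+1}$, and one checks (using $0\preceq \bar S\preceq S$ and the fact that the affine constraints only touch entries with $|\beta+\beta'|\ge 2d+1$) that the three conditions of Theorem~\ref{thm1} survive. Your iterative rank-one deflation accomplishes the same thing piecewise: each subtraction $t^*vv^T$ with $v\in\mathrm{range}(S)\cap R_d$ kills one direction of the Schur complement, and your degree bound $\deg(v^2)\le 2d$ is exactly the observation that the modification is confined to the upper-left block. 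The Schur complement route is shorter and reaches the minimal such $\bar S$ in one step; your approach is more elementary, avoids pseudoinverse bookkeeping, and makes the role of the low-degree slice $R_d$ very explicit.
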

\begin{proof}
Suppose $\exists S$ satisfying Theorem \ref{thm1}. By zeroing the Schur compliment of the submatrix indexed by degree $d+1$ monomials, we can produce $\bar{S}$ with $\rank{\bar{S}} \leq \dim R_{d+1}-\dim R_d$.
\end{proof}

To better aid our analysis of the problem we reformulate Theorem \ref{thm1} into a  problem involving polynomials. Theorem \ref{thm:sos} gives an alternative formulation of Theorem \ref{thm1} by noticing that the polynomial $\mathbf{x}^T S \mathbf{x}$ do not depend on the $z_{\alpha, i}$ variables and interpreting the problem as a sum of squares decomposition.

\begin{thm}\label{thm:sos}
The  moment matrix $M_{d+1} = V_{d+1}^T \Lambda V_{d+1} \in \mathcal{P}$ corresponding to the points $\bvec{z}_1, \hdots, \bvec{z}_r$ is optimal, or is in $\mathcal{P}^*$, if there exists $q \in R_{2d+2}$ and $q_{\alpha} \in R_{d+1}$ for all $\alpha\in \N^n$ with $\abs{\alpha} = d+1$ such that:
\begin{align*}
q & = \sum_{\abs{\alpha}=d+1} q_\alpha^2\\
q_{\alpha}(\bvec{z}_i) & = 0, \text{ for all } 1\leq i \leq r, \abs{\alpha}=d+1\\
\coeff(q,x^{\beta}) & = \delta_{2 \vert \beta} \text{ for } \abs{\beta} = 2d+1, 2d+2,
\end{align*}
where $$\delta_{2 \vert \beta} = \begin{cases}
1 \quad \text{if } \exists \gamma \in \N^n \text{ such that } 2 \gamma = \beta\\
0 \quad \text{otherwise.}\\
\end{cases}$$
\end{thm}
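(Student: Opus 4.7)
The plan is to reduce Theorem \ref{thm:sos} to Theorem \ref{thm1} by building, from the polynomial data $(q,\{q_\alpha\})$, an explicit dual slack matrix $S$ that satisfies the three certificate conditions of Theorem \ref{thm1}. Throughout, I use the column vector $\bvec{x} := [x^\beta]_{\vert\beta\vert\leq d+1}$ of monomials of degree at most $d+1$, so that for any symmetric $A\in{\mathcal S}_N$ the quadratic form $\bvec{x}^T A\bvec{x}$ is a polynomial of degree at most $2d+2$. The construction is direct: form the matrix $L$ with rows indexed by $\alpha$ with $\vert\alpha\vert=d+1$ and columns indexed by monomials of degree $\leq d+1$, putting $L_{\alpha,\beta} = \coeff(q_\alpha,x^\beta)$, and set $S := L^T L$. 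Then $S\succeq 0$ and $\bvec{x}^T S\bvec{x} = \sum_{\vert\alpha\vert=d+1} q_\alpha(\bvec{x})^2 = q$ are immediate.

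It remains to verify the support form $S = I - \sum_{\vert\alpha\vert\leq 2d} y_\alpha Y_\alpha - \sum z_{\alpha,i}Z_{\alpha,i}$ and the kernel condition $M_{d+1}S = 0$. The key fact for the support form is that $\bvec{x}^T Z_{\alpha,i}\bvec{x}\equiv 0$: because $Z_{\alpha,i}$ is supported on $\{(\gamma,\delta):\gamma+\delta=\alpha\}$ and is orthogonal to the all-ones matrix $Y_\alpha$ on that support, $\sum_{\gamma+\delta=\alpha}(Z_{\alpha,i})_{\gamma,\delta}=0$, which is precisely the coefficient of $x^\alpha$ in $\bvec{x}^T Z_{\alpha,i}\bvec{x}$. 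Expanding $I - S$ in the orthogonal basis $\{Y_\alpha\}\cup\{Z_{\alpha,i}\}$ and applying the quadratic form, the $Z$-contributions drop out and
\[
\sum_{\vert\gamma\vert\leq d+1} x^{2\gamma} - q(\bvec{x}) \;=\; \bvec{x}^T(I - S)\bvec{x} \;=\; \sum_\alpha c_\alpha y_\alpha x^\alpha,
\]
with $c_\alpha>0$. The hypothesis $\coeff(q,x^\beta)=\delta_{2\vert\beta}$ for $\vert\beta\vert\in\{2d+1,2d+2\}$ exactly matches the two sides at those top degrees, forcing $y_\alpha = 0$ for every such $\alpha$, which is the required support constraint.

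For the kernel condition, the hypothesis $q_\alpha(\bvec{z}_i)=0$ for all $i,\alpha$ says that every row of $L$, viewed as a polynomial, vanishes at each $\bvec{z}_i$, equivalently $L V_{d+1}^T = 0$. Hence $S V_{d+1}^T = L^T(L V_{d+1}^T)=0$, and by the symmetry of $S$ we get $V_{d+1}S = 0$, so $M_{d+1}S = V_{d+1}^T \Lambda\, V_{d+1}S = 0$. Applying Theorem \ref{thm1} then concludes, and Proposition \ref{prop:rank} is consistent with our using exactly $\binom{n+d}{d+1}$ summands in the SOS. The main technical point I expect is the support-structure step: realizing that the $Z_{\alpha,i}$'s are \emph{invisible} to the monomial quadratic form, and that $\bvec{x}^T I\bvec{x}$ contributes precisely $\sum x^{2\gamma}$ in top degree, is what makes the three polynomial conditions translate cleanly back into a matrix certificate. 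Once this is set up, the remaining verifications are bookkeeping.
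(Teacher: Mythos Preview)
Your proposal is correct and follows essentially the same approach as the paper: build $S$ from the coefficient matrix of the $q_\alpha$'s, verify $S\succeq 0$ and $M_{d+1}S=0$ via $V_{d+1}L=0$, and check the support form by exploiting that $\bvec{x}^T Z_{\alpha,i}\bvec{x}=0$. The only difference is cosmetic: the paper proves full equivalence with Theorem~\ref{thm1} and handles the support form via a dimension count on the two affine descriptions, whereas you argue it directly by expanding $I-S$ in the basis and matching top-degree coefficients; both routes are equally short.
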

\begin{proof}
We prove the equivalence of the criteria of Theorem \ref{thm1} and Corollary \ref{thm:sos}. 
First we prove that the conditions of Theorem \ref{thm1} imply the condition of Corollary \ref{thm:sos}. Assume there exists $S$ such that $M_{d+1}S = 0$, $S= I-\sum y_{\alpha} Y_{\alpha} -\sum z_{\alpha,i} Z_{\alpha,i}$, and $S \succeq 0$ as in Theorem \ref{thm1}. Without loss of generality, from Proposition \ref{prop:rank} we assume $\rank{S} \leq {{n+d}\choose{d+1}}$ with Cholesky factorization $S=LL^T$. With $\mathbf{x} = [x^\beta]_{|\beta|\leq d+1}$, we let $q = \mathbf{x}^T S \mathbf{x}$ and let the collection $q_\alpha$ consist of the polynomials $L^T \bvec{x}$. Then $q = \bvec{x}^T S \bvec{x} = \bvec{x}^T L L^T \bvec{x} = \sum_{\alpha} q_{\alpha}^2 $, and each $q_\alpha$ vanishes on $\bvec{z_i}$ since $M_{d+1} S=0 \implies V_{d+1}^T \Lambda V_{d+1} L L^T = 0 \implies V_{d+1} L = 0$. Using the observations that
$$ <\mathbf{xx}^T, I> = \mathbf{x}^T \mathbf{x}, \quad <\mathbf{xx}^T, Y_\alpha> = x^\alpha, \quad<\mathbf{xx}^T, Z_{\alpha,i}> = 0,$$ we conclude that for $\abs{\beta}=2d+1,2d+2$, we have,
\begin{align*}
\operatorname{coeff}(q,x^{\beta}) & = \operatorname{coeff}(\bvec{x}^T ( I - \sum_{|\alpha|\leq 2d} y_{\alpha}Y_{\alpha}+\sum_{\substack{|\alpha| \leq 2d+2,\\ 1 \leq i \leq \operatorname{dim}({\mathcal S}_\alpha)-1}} z_{\alpha,i} Z_{\alpha,i} ) \bvec{x}, x^\beta)\\
& = \operatorname{coeff}(\bvec{x}^T I \bvec{x}, x^\beta)\\
& = \delta_{2\vert \beta}.
\end{align*}
Now we prove that the conditions of Theorem \ref{thm:sos} imply the conditions of Theorem \ref{thm1}. Assume there exists $q$ and $q_{\alpha}$ as in Theorem \ref{thm:sos}. Then we form a coefficient matrix, $L$, from the coefficient vectors of $q_\alpha$ and let $S = LL^T$ so $ S\succeq 0$. Also $q_{\alpha}(\bvec{z}_i)=0$ for $1 \leq i \leq r \implies V_{d+1} L = 0 \implies M_{d+1} S = 0$. To conclude, it is sufficient to show that the two sets
$$\left\{S\in {\mathcal S}_N\;:\; <Y_{\beta},S>=\delta_{2\mid\beta} \text{ for } |\beta|=2d+1,2d+2 \right\}$$ and $$\left\{S \in {\mathcal S}_N\; :\;S=I-\sum_{|\alpha|\leq 2d} y_{\alpha}Y_{\alpha}-\sum_{\substack{|\alpha| \leq 2d+2,\\ 1 \leq i \leq \operatorname{dim}({\mathcal S}_\alpha)-1}} z_{\alpha,i} Z_{\alpha,i}, \; y_\alpha, z_{\alpha,i} \in \R\right\}$$ are equal.
Above we proved the ``$\supseteq$" direction. Since both of these sets are affine spaces, it is enough to prove that the vector spaces $$\{S\in {\mathcal S}_N\;:\; <Y_{\beta},S>=0 \text{ for } |\beta|=2d+1,2d+2 \}$$ and $$\left\{S \in {\mathcal S}_N \;:\;S=\sum_{|\alpha|\leq 2d} y_{\alpha}Y_{\alpha}+\sum_{\substack{|\alpha| \leq 2d+2,\\ 1 \leq i \leq \operatorname{dim}({\mathcal S}_\alpha)-1}} z_{\alpha,i} Z_{\alpha,i}, \; y_\alpha, z_{\alpha,i} \in \R \right\}$$ have the same dimension.
By construction, we have that 
$\{ Y_{\alpha},Y_{\beta},Z_{\gamma,i}\;:\; |\alpha|\leq 2d, |\beta|=2d+1,2d+2, |\gamma| \leq 2d+2, 1 \leq i \leq \operatorname{dim}({\mathcal S}_\gamma) -1\}$ is a basis for ${\mathcal S}_N$, which proves the claim.
\end{proof}

Alternatively we consider the optimality of $M_{d+1}$ by utilizing a change of basis of the complimentary solution.
The next proposition connects the vanishing ideal of $r$ real points to the kernel of the Vandermonde matrix of the points. 
\begin{propo}\label{prop:K}
Let $V_{d}, V_{d+1}$ be the Vandermonde matrices of $r$ real points of degrees $d$ and $d+1$, respectively, and assume that 
$$
{\rm rank} (V_d)= {\rm rank }(V_{d+1})=r.
$$
Let $K_{d}$ be a matrix with columns that form a basis for $\operatorname{Ker}(V_d)$. Then there exists a  matrix  $F$ of size $(\dim R_d) \times (\dim R_{d+1}-\dim R_d)$ such that the columns of 
$$K_{d+1} := \begin{bmatrix} K_d & -F \\ 0 & I \end{bmatrix}$$
form a basis for $\operatorname{Ker}(V_{d+1})$.\\
Moreover, the vanishing ideal of our $r$ points is generated by polynomials corresponding to the columns of $K_{d+1}$ and  the normal forms of the monomials of degree $d+1$ modulo this vanishing ideals correspond to the columns of $F$.  
\end{propo}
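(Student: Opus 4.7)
The plan is to exploit the natural block decomposition of $V_{d+1}$ obtained by separating the columns indexed by monomials of degree $\le d$ from those of degree exactly $d+1$. Write
$$V_{d+1} = [\,V_d \mid W\,],$$
where $W$ is the $r \times (\dim R_{d+1} - \dim R_d)$ submatrix corresponding to the pure degree-$(d+1)$ monomials. Block-multiplying with the candidate
$$K_{d+1} = \begin{bmatrix} K_d & -F \\ 0 & I \end{bmatrix}$$
gives $V_{d+1}\,K_{d+1} = [\,V_d K_d \mid W - V_d F\,]$. The left block is zero by the defining property of $K_d$, so $K_{d+1}$ sits in $\operatorname{Ker}(V_{d+1})$ if and only if $F$ solves the matrix equation $V_d F = W$. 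Existence of such an $F$ follows immediately from the rank hypothesis: $\operatorname{rank}(V_d) = \operatorname{rank}(V_{d+1}) = r$ forces the column space of $W$ to sit inside the column space of $V_d$, so each column of $W$ can be expressed as a linear combination of columns of $V_d$.

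I would then verify that the columns of $K_{d+1}$ are linearly independent. A vanishing combination must, by the bottom $I$-block, force the coefficients of the $-F$ columns to vanish; what remains is a null combination of the columns of $K_d$, which are independent by construction. A dimension count $(\dim R_d - r) + (\dim R_{d+1} - \dim R_d) = \dim R_{d+1} - r$ matches $\dim \operatorname{Ker}(V_{d+1})$, so we indeed get a basis.

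For the second half I would translate to polynomials: each column of $K_{d+1}$, read against the row indexing by monomials in $R_{d+1}$, is the coefficient vector of a polynomial in $R_{d+1}$ that vanishes on $\{\bvec{z}_1,\dots,\bvec{z}_r\}$ and hence lies in the vanishing ideal $I$. The columns from $K_d$ span $I \cap R_d$, while each "new" column has the form $\bigl(\,-F_{:,\alpha}\,,\, e_\alpha\,\bigr)^T$ and represents a relation
$$x^\alpha - \sum_{|\beta|\le d} F_{\beta,\alpha}\,x^\beta \in I, \qquad |\alpha|=d+1.$$
This is precisely the assertion that the $\alpha$-th column of $F$ is a degree-$\le d$ representative of the normal form of $x^\alpha$ modulo $I$.

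To conclude that these polynomials generate $I$, I would invoke the stopping criterion (Theorem \ref{thm:stop}) together with the border basis principle underlying the Curto--Fialkow/Mourrain theory already used in the paper: once $\operatorname{rank}(V_d) = \operatorname{rank}(V_{d+1}) = r$, iterated application of the rewriting rules $x^\alpha \mapsto \sum F_{\beta,\alpha}x^\beta$ reduces every monomial of degree $> d$ to an element of $R_d$, after which the relations in $K_d$ cut it down further inside $I \cap R_d$. The delicate part — which I expect to be the main obstacle — is verifying that this multivariate rewriting is confluent and terminates on every element of $I$; this is the standard border basis check (commutation of the induced multiplication-by-$x_i$ operators on the quotient), and in the present setting it follows from the fact that the rank has already stabilized at degree $d$, so the new generators of degree $d+1$ carry all the commutativity data needed to reach any higher degree monomial by induction.
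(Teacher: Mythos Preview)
Your argument is correct and follows the same line as the paper's proof: the existence of $F$ from the rank hypothesis, and then an appeal to the Curto--Fialkow/Lasserre--Laurent--Rostalski border basis theory for the ideal-generation and normal-form claims. The paper's proof is a two-sentence sketch that simply asserts the first part ``follows from the assumption that $\operatorname{rank}(V_d)=\operatorname{rank}(V_{d+1})$'' and then cites \cite{CurFial1996,LaLaRo2008} for the second; your version unpacks the block linear algebra explicitly (the solvability of $V_dF=W$, independence, dimension count) and spells out the rewriting/confluence issue that the paper absorbs into the citation, but the underlying approach is the same.
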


\begin{proof} The first statement simply follows from the assumption that ${\rm rank} (V_d)= {\rm rank }(V_{d+1})$. To prove the second statement, we note that since $M_{d+1} = V_{d+1}^T \Lambda V_{d+1}$ with $\Lambda\geq 0$ we have $\operatorname{Ker}(V_{d+1}) = \operatorname{Ker}(M_{d+1})$ and  ${\rm rank} (M_d)= {\rm rank }(M_{d+1})=r$. By \cite{CurFial1996,LaLaRo2008}, the polynomials corresponding to the kernel of $M_{d+1}$ form a so called {\em border basis} for the vanishing ideal of the $r$ points, which implies the second statement.
\end{proof}

Theorem \ref{thm2} gives an alternative formulation of Theorem \ref{thm1}  by using the matrix $K_{d+1}$ as described in Proposition \ref{prop:K}. 


\begin{thm}\label{thm2}  Consider the  moment matrix $M_{d+1} = V_{d+1}^T \Lambda V_{d+1}$, and fix a matrix $K_{d+1}$ as described in Proposition \ref{prop:K}. Then the moment matrix $M_{d+1} \in \mathcal{P}^*$ if there exists a symmetric block matrix $G \in {\mathcal S}_{N-r}$ such that:
\begin{align*}
& G = \begin{bmatrix} * & g\\ g^T & I - \sum_{\substack{ |\alpha| = 2d+2, \\1 \leq i \leq \operatorname{dim}({\mathcal S}_\alpha)-1}} z_{\alpha,i} \tilde{Z}_{\alpha,i} \end{bmatrix}\\
&\operatorname{coeff}(\mathbf{x}^T K_{d+1} G K_{d+1}^T \mathbf{x}, x^\beta) = 0 \text{ for } |\beta| = 2d+1\\
&G \succeq 0.
\end{align*}
where $g$ is a real matrix of size ${{n+d}\choose{d}}-r$ by ${n+d}\choose{d+1}$, $I$ is the identity matrix of size ${n+d}\choose{d+1}$, $\tilde{Z}_{\alpha,i}$ is the submatrix of $Z_{\alpha,i}$ of degree $2d+2$ monomials, and $*$ is a real symmetric matrix of size ${{n+d}\choose{d}}-r$.
\end{thm}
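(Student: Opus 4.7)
The plan is to reduce Theorem \ref{thm2} to the optimality criterion of Theorem \ref{thm1} by using $G$ to build a candidate
$$
S \;:=\; K_{d+1}\, G\, K_{d+1}^T.
$$
The block structure of $G$ in the statement is chosen to mirror exactly the block structure of $K_{d+1}$ given by Proposition \ref{prop:K}, which makes this outer product natural; the three conditions of Theorem \ref{thm1} should then follow directly from the three conditions imposed on $G$.

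First, I would verify $M_{d+1}S = 0$. This is immediate from Proposition \ref{prop:K}: since $V_{d+1} K_{d+1} = 0$, we have $M_{d+1} K_{d+1} = V_{d+1}^T \Lambda V_{d+1} K_{d+1} = 0$, so $M_{d+1} S = 0$. Next, $S \succeq 0$ follows from $G \succeq 0$ together with $K_{d+1}$ having full column rank. The non-trivial part is the third condition, namely that $S$ admits the affine representation
$$
S = I - \sum_{|\alpha|\leq 2d} y_\alpha Y_\alpha - \sum_{\substack{|\alpha|\leq 2d+2,\\ 1\leq i\leq \operatorname{dim}({\mathcal S}_\alpha)-1}} z_{\alpha,i} Z_{\alpha,i}
$$
for some real $y_\alpha, z_{\alpha,i}$. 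By the equivalence established inside the proof of Theorem \ref{thm:sos} (where the two affine slices were shown to coincide), this is in turn equivalent to $\operatorname{coeff}(\mathbf{x}^T S \mathbf{x}, x^\beta) = \delta_{2\vert \beta}$ for $|\beta|=2d+1$ and $|\beta|=2d+2$.

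The degree $2d+1$ equations hold by the hypothesis $\operatorname{coeff}(\mathbf{x}^T K_{d+1} G K_{d+1}^T \mathbf{x}, x^\beta) = 0$ imposed in the statement (and $\delta_{2\vert\beta}=0$ when $|\beta|$ is odd). The degree $2d+2$ equations should come for free from the prescribed bottom-right block of $G$. To check this, write $K_{d+1}^T\mathbf{x}$ in block form with top part $u = K_d^T \mathbf{x}_{\leq d}$ of degree $\leq d$ and bottom part $v = -F^T \mathbf{x}_{\leq d} + \mathbf{x}_{d+1}$ of degree $\leq d+1$ with leading component $\mathbf{x}_{d+1}$, then expand
$$
\mathbf{x}^T S\mathbf{x} = u^T (\ast)\, u + 2\, u^T g\, v + v^T\Bigl(I - \sum_{\alpha,i} z_{\alpha,i}\tilde{Z}_{\alpha,i}\Bigr)v,
$$
and observe that only the last term can contribute in degree $2d+2$, and only through the $\mathbf{x}_{d+1}$ parts of $v$. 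The identity block contributes $\sum_{|\alpha|=d+1} x^{2\alpha}$, matching $\delta_{2\vert \beta}$ exactly, while the $\tilde{Z}_{\alpha,i}$ corrections contribute nothing because each $\mathbf{x}^T Z_{\alpha,i}\mathbf{x}$ vanishes identically as a polynomial: the orthogonality $\langle Y_\alpha, Z_{\alpha,i}\rangle = 0$ forces the sum of the entries of $Z_{\alpha,i}$ supported on $\{(\gamma,\delta):\gamma+\delta = \alpha\}$ to be zero.

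I do not foresee serious obstacles beyond careful bookkeeping of block sizes and degree accounting. The one subtlety to isolate early is the identity $\mathbf{x}^T Z_{\alpha,i}\mathbf{x}\equiv 0$, which is precisely what lets the bottom-right block of $G$ serve as the exact affine slice of admissible degree $2d+2$ parts. Once this is in place, the top-left block ``$\ast$'' is unconstrained in Theorem \ref{thm2} because it affects $\mathbf{x}^T S\mathbf{x}$ only in degrees $\leq 2d$, where the $y_\alpha$ coefficients in the Theorem \ref{thm1} representation are free; and the off-diagonal block $g$ carries precisely the freedom needed to realize any prescribed degree $2d+1$ coefficients, which the hypothesis pins to zero. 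Applying Theorem \ref{thm1} then yields $M_{d+1}\in \mathcal{P}^*$, as desired.
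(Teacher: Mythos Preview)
Your proposal is correct and follows essentially the same route as the paper: set $S = K_{d+1} G K_{d+1}^T$ and verify the three conditions of Theorem~\ref{thm1}, using the block structure of $K_{d+1}$ and the identity $\mathbf{x}^T Z_{\alpha,i}\mathbf{x}=0$ to pin down the degree $2d+2$ coefficients. Two minor remarks: $S\succeq 0$ follows from $G\succeq 0$ alone (full column rank of $K_{d+1}$ is not needed for that direction), and the paper in fact proves the equivalence of the criteria in Theorems~\ref{thm1} and~\ref{thm2}, whereas you only argue the direction required by the stated ``if''.
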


\begin{proof}
We prove the equivalence of the criteria of Theorem \ref{thm2} and Theorem \ref{thm1}. Assume there exists $S$ such that $M_{d+1}S = 0$, $S= I-\sum y_{\alpha} Y_{\alpha} -\sum z_{\alpha,i} Z_{\alpha,i}$, and $S \succeq 0$ as in Theorem \ref{thm1}. Since $M_{d+1}S=0$ and the rows of $K_{d+1}$ form a basis for the left and right $\operatorname{Ker}(M_{d+1})$, we have $S = K_{d+1} G K_{d+1}^T$ for some $G \in {\mathcal S}_{N-r}$. With $\mathbf{x} = [x^\beta]_{|\beta|\leq d+1}$ as above, we have
$$ <\mathbf{xx}^T, I> = \mathbf{x}^T \mathbf{x}, \quad <\mathbf{xx}^T, Y_\alpha> = x^\alpha, \quad<\mathbf{xx}^T, Z_{\alpha,i}> = 0.
$$ Using $S= K_{d+1} G K_{d+1}^T = I-\sum y_{\alpha} Y_{\alpha} -\sum z_{\alpha,i} Z_{\alpha,i}$ we construct the polynomial $$\mathbf{x}^TK_{d+1}G K_{d+1}^T\mathbf{x}=\mathbf{x}^T\mathbf{x} - \displaystyle\sum_{|\alpha| \leq 2d} y_\alpha x^\alpha.$$
Therefore, $\operatorname{coeff}(\mathbf{x}^TK_{d+1}G K_{d+1}^T\mathbf{x},x^\beta) = 0$ for $|\beta|=2d+1$. Since $S \succeq 0$ and $K_{d+1}$ full row rank, we also have $G \succeq 0$.\\

Conversely, assume there exists $G \in {\mathcal S}_{N-r}$ with the form 
$$
G = \begin{bmatrix} * & g\\ g^T & I - \sum_{\substack{ |\alpha| = 2d+2, \\1 \leq i \leq \operatorname{dim}({\mathcal S}_\alpha)-1}} z_{\alpha,i} \tilde{Z}_{\alpha,i} \end{bmatrix}$$
 such that $\operatorname{coeff}(\mathbf{x}^T K_{d+1}^T G K_{d+1} \mathbf{x}, x^\beta) = 0 \text{ for } |\beta| = 2d+1$ and $G \succeq 0$ so that $S=K_{d+1}^TG K_{d+1} \succeq 0$. Additionally, using the identities above it is apparent that $\operatorname{coeff}(\mathbf{x}^T K_{d+1}^T G K_{d+1} \mathbf{x}, x^{2\beta}) = 1 \text{ for } |\beta| = d+1$. Finally, we use the same argument as in the proof of Theorem \ref{thm:sos} to conclude the proof. 
\end{proof}



\section[Sufficient Cond. for Optimality]{Sufficient Conditions for Optimality}\label{sect:opt}
In this section we demonstrate that in some special cases $M_{d+1}$ will generically be optimal in $(\mathcal{P})$ by imposing an assumption on the polynomials $q_\alpha\in R_{d+1} $ in Corollary \ref{thm:sos}, namely,  we assume that the degree $d+1$ part of $q_{\alpha}$ is equal to  $x^{\alpha}$.
\begin{cor}\label{cor:sosassumption}
The  moment matrix $M_{d+1} = V_{d+1}^T \Lambda V_{d+1} \in \mathcal{P}$ corresponding to the points $\bvec{z}_1, \hdots, \bvec{z}_r$ is optimal if there exists $q \in R_{2d+2}$ and for all $\alpha\in \N^n$ with $\abs{\alpha} = d+1$ there exist 
\begin{equation}\label{assump}
q_{\alpha} = x^{\alpha} + \text{lower degree terms}  \in R_{d+1}
\end{equation}
such that:
\begin{align*}
q & = \sum_{\abs{\alpha}=d+1} q_\alpha^2\\
q_{\alpha}(\bvec{z}_i) & = 0, \text{ for all } 1\leq i \leq r, \abs{\alpha}=d+1\\
\coeff(q,x^{\beta}) & = 0 \text{ for } \abs{\beta} = 2d+1.
\end{align*}
\end{cor}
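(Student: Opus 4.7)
The plan is to derive Corollary \ref{cor:sosassumption} as a direct specialization of Theorem \ref{thm:sos}, by showing that the leading-monomial normalization (\ref{assump}) automatically supplies the degree-$(2d+2)$ coefficient condition that Theorem \ref{thm:sos} requires but the corollary omits. First I would line up the two sets of hypotheses. The sum-of-squares identity $q = \sum_{|\alpha|=d+1} q_\alpha^2$ and the vanishing $q_\alpha(\bvec{z}_i) = 0$ are identical in both statements. The odd-degree condition $\coeff(q, x^\beta) = 0$ for $|\beta| = 2d+1$ is also the same, since $\delta_{2\mid\beta} = 0$ whenever $|\beta|$ is odd. Thus the only apparent gap between Corollary \ref{cor:sosassumption} and Theorem \ref{thm:sos} is the requirement $\coeff(q, x^\beta) = \delta_{2\mid\beta}$ for $|\beta| = 2d+2$.

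Next I would verify that assumption (\ref{assump}) forces this remaining condition automatically. Writing $q_\alpha = x^\alpha + r_\alpha$ with $\deg r_\alpha \leq d$, one computes
$$q_\alpha^2 = x^{2\alpha} + 2 x^\alpha r_\alpha + r_\alpha^2,$$
and since $\deg(2 x^\alpha r_\alpha) \leq 2d+1$ and $\deg(r_\alpha^2) \leq 2d$, the homogeneous degree-$(2d+2)$ part of $q_\alpha^2$ is exactly $x^{2\alpha}$. Summing over $|\alpha|=d+1$, the degree-$(2d+2)$ part of $q$ equals $\sum_{|\alpha|=d+1} x^{2\alpha}$. For any $\beta$ with $|\beta|=2d+2$, this yields $\coeff(q, x^\beta) = 1$ precisely when $\beta = 2\alpha$ for some $|\alpha| = d+1$, and $\coeff(q, x^\beta) = 0$ otherwise, which is the definition of $\delta_{2\mid\beta}$.

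Having established that the hypotheses of Corollary \ref{cor:sosassumption} imply those of Theorem \ref{thm:sos}, the conclusion $M_{d+1} \in \mathcal{P}^*$ follows immediately from that theorem. There is no real obstacle in this argument; the content of the corollary is the simple but useful observation that fixing the leading monomial of each $q_\alpha$ to be $x^\alpha$ converts the degree-$(2d+2)$ part of the sum-of-squares certificate into a tautology, leaving only the vanishing of the degree-$(2d+1)$ coefficients as an active constraint. This normalization is strictly more restrictive than the hypothesis of Theorem \ref{thm:sos}, so the corollary provides sufficient but not necessary conditions; the payoff is that the remaining constraints in Corollary \ref{cor:sosassumption} are in a form amenable to the generic analysis carried out in Section \ref{sect:opt}.
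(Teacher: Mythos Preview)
Your proposal is correct and follows essentially the same route as the paper: both reduce the corollary to Theorem \ref{thm:sos} by observing that the leading-term normalization (\ref{assump}) forces the degree-$(2d+2)$ part of $q=\sum q_\alpha^2$ to equal $\sum_{|\alpha|=d+1} x^{2\alpha}$, so that $\coeff(q,x^\beta)=\delta_{2\mid\beta}$ for $|\beta|=2d+2$ holds automatically. Your write-up merely makes explicit the decomposition $q_\alpha^2 = x^{2\alpha} + 2x^\alpha r_\alpha + r_\alpha^2$ and the degree count that the paper leaves implicit.
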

\begin{proof}
Suppose there exists $q$ and $q_{\alpha} = x^{\alpha} + l.d.t.$ satisfying Corollary \ref{cor:sosassumption}, then $\operatorname{coeff}(q,x^\beta) = \delta_{2 \vert \beta}$ for $\abs{\beta} = 2d+2$ because degree $2d+2$ terms only depend on the squares of the degree $d+1$ terms in $q_\alpha$.
\end{proof}
Assumption (\ref{assump}) on $q_\alpha$ simplifies the criteria sufficient to prove optimality of $M_{d+1}$ into the solvability of a linear system. We note here that $\operatorname{Ker}(V_{d+1}) = \operatorname{Ker}(M_{d+1})$, so we can use the two interchangeably.

Using $K_{d+1}$ defined in Proposition \ref{prop:K}, we can look at a matrix existence formulation of Corollary \ref{cor:sosassumption} that is analogous to Theorem \ref{thm2}.

\begin{cor}\label{cor:assumption}
Let $K_{d+1}$ be as in Corollary \ref{cor:sosassumption}. The  moment matrix $M_{d+1} = V_{d+1}^T \Lambda V_{d+1} \in \mathcal{P}^*$ if there exists $G \in {\mathcal S}_{N-r}$ such that:
\begin{align*}
& G=\begin{bmatrix} gg^T & g\\ g^T & I\\ \end{bmatrix}\\
&\operatorname{coeff}(\mathbf{x}^T K_{d+1} G K_{d+1}^T \mathbf{x}, x^\beta) = 0 \text{ for } |\beta| = 2d+1.\\
\end{align*}
where $g$ is a real matrix of size $\dbinom{n+d}{n}-r$ by $\dbinom{n+d}{d+1}$ and $I$ is the identity matrix of size $\dbinom{n+d}{d+1}$.
\end{cor}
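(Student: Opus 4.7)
The plan is to deduce Corollary \ref{cor:assumption} by specializing the already-proven Corollary \ref{cor:sosassumption}. The block structure imposed on $G$ is, in disguise, a Cholesky square, and the pinned identity block at its bottom right is precisely what will force the leading-form normalization $q_\alpha = x^\alpha + \text{l.d.t.}$ required by Corollary \ref{cor:sosassumption}.

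First I would factor
\[
G = \begin{bmatrix} g \\ I \end{bmatrix} \begin{bmatrix} g^T & I \end{bmatrix},
\]
which is manifestly positive semidefinite. Setting $L := K_{d+1} \begin{bmatrix} g \\ I \end{bmatrix}$ and using the block form $K_{d+1} = \begin{bmatrix} K_d & -F \\ 0 & I \end{bmatrix}$ from Proposition \ref{prop:K}, a direct block multiplication gives
\[
L = \begin{bmatrix} K_d\, g - F \\ I \end{bmatrix},
\]
an $N \times \binom{n+d}{d+1}$ matrix whose columns I interpret as coefficient vectors (in the monomial basis of $R_{d+1}$) of polynomials $q_\alpha$ indexed by the degree $d+1$ monomials $x^\alpha$. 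The bottom $I$-block of $L$ supplies the degree $d+1$ coefficients of $q_\alpha$, yielding $q_\alpha = x^\alpha + \text{l.d.t.}$; and because $V_{d+1} K_{d+1} = 0$, every column of $K_{d+1}$ already corresponds to a polynomial vanishing on $\bvec{z}_1, \ldots, \bvec{z}_r$, hence so does each $q_\alpha$.

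Setting $q := \mathbf{x}^T K_{d+1} G K_{d+1}^T \mathbf{x} = \mathbf{x}^T L L^T \mathbf{x} = \sum_\alpha q_\alpha^2$, I would then verify the remaining hypotheses of Corollary \ref{cor:sosassumption}. The requirement $\operatorname{coeff}(q, x^\beta) = 0$ for $|\beta| = 2d+1$ is exactly the coefficient hypothesis imposed in Corollary \ref{cor:assumption}, so nothing new is needed there. The condition at $|\beta| = 2d+2$ is automatic: since each $q_\alpha$ has leading form $x^\alpha$, the top-degree part of $\sum_\alpha q_\alpha^2$ equals $\sum_\alpha x^{2\alpha}$, producing coefficient $1$ on squares and $0$ on non-squares in degree $2d+2$. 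Invoking Corollary \ref{cor:sosassumption} then delivers $M_{d+1} \in \mathcal{P}^*$.

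The main obstacle, though a modest one, is the bookkeeping in the block multiplication: one must track carefully that the identity blocks at the bottom right of $K_{d+1}$ and at the bottom of $\begin{bmatrix} g \\ I \end{bmatrix}$ compose to place exactly a $1$ at the $(x^\alpha,\alpha)$-slot of $L$, with no spurious contamination from the $g$ block. This is what guarantees both the leading-form normalization of each $q_\alpha$ and the automatic degree $2d+2$ coefficient behavior. Once this is in hand, Corollary \ref{cor:assumption} appears as nothing more than the specialization $z_{\alpha,i} = 0$, $* = gg^T$ of Theorem \ref{thm2}, and no further obstruction is anticipated.
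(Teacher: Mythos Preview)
Your proposal is correct and follows essentially the same route as the paper's proof: factor $G = \begin{bmatrix} g \\ I \end{bmatrix}\begin{bmatrix} g^T & I \end{bmatrix}$, define the $q_\alpha$ as the entries of $\mathbf{x}^T K_{d+1}\begin{bmatrix} g \\ I \end{bmatrix}$, use $V_{d+1}K_{d+1}=0$ for the vanishing condition, and read off the degree $2d+2$ and $2d+1$ coefficient conditions from the block structure. Your write-up is in fact slightly more explicit than the paper's, since you spell out the block product $L = \begin{bmatrix} K_d g - F \\ I \end{bmatrix}$ and name the invocation of Corollary~\ref{cor:sosassumption}, whereas the paper leaves the block computation implicit.
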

\begin{proof}
$G$ is clearly positive semidefinite with the decomposition $G = \begin{bmatrix} g\\ I\\ \end{bmatrix} \begin{bmatrix} g^T & I\\ \end{bmatrix}$. Using $G$ we let $q = \bvec{x}^T K_{d+1} G K_{d+1}^T \bvec{x}$ and associate each $q_{\alpha}$ with the corresponding element of the vector $\bvec{x}^T K_{d+1} \begin{bmatrix} g\\ I\\ \end{bmatrix}$. Then $q  = \sum_{\alpha} q_\alpha^2$ by construction. Since $K_{d+1}$ is in the null space of $V_{d+1}$ we also conclude that
$q_{\alpha}(\bvec{z}_i)  = 0, \text{ for all } 1\leq i \leq r, \abs{\alpha}=d+1$. By the structure of the matrices $K_{d+1}$ and $G$ we have that the degree $2d+2$ part of the polynomial $\mathbf{x}^T K_{d+1} G K_{d+1}^T \mathbf{x}$ is equal to $\sum_{|\alpha|=d+1} \left(x^\alpha\right)^2$. Lastly,
$\operatorname{coeff}(\mathbf{x}^T K_{d+1} G K_{d+1}^T \mathbf{x}, x^\beta) = 0 \text{ for } |\beta| = 2d+1 \implies \coeff(q,x^{\beta}) = 0 \text{ for } \abs{\beta} = 2d+1$.
\end{proof}

\begin{propo}\label{linear}
The values of $g$ satisfying Corollary \ref{cor:assumption} are the solution of an inhomogeneous linear system of equations.
\end{propo}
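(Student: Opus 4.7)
The plan is to exploit the special factored form of $G$ in Corollary \ref{cor:assumption} to reduce the polynomial-coefficient identity to a linear system in the entries of $g$. First I would write $G = \begin{bmatrix} g \\ I \end{bmatrix} \begin{bmatrix} g^T & I \end{bmatrix}$, so that the quadratic form becomes a sum of squares
$$\mathbf{x}^T K_{d+1} G K_{d+1}^T \mathbf{x} \;=\; w^T w \;=\; \sum_{|\alpha|=d+1} w_\alpha^2,$$
where $w := \begin{bmatrix} g^T & I \end{bmatrix} K_{d+1}^T \mathbf{x}$ is a column vector of polynomials indexed by $\{\alpha : |\alpha| = d+1\}$. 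In particular, the entries $w_\alpha$ play the role of the polynomials $q_\alpha$ from Corollary \ref{cor:sosassumption}.

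Next I would use the block form $K_{d+1} = \begin{bmatrix} K_d & -F \\ 0 & I \end{bmatrix}$ from Proposition \ref{prop:K} to compute $w$ explicitly. Splitting the monomial vector into its parts $\mathbf{x}_{\leq d}$ and $\mathbf{x}_{d+1}$ of degrees $\leq d$ and exactly $d+1$, one obtains
$$w \;=\; \mathbf{x}_{d+1} \;+\; g^T K_d^T \mathbf{x}_{\leq d} \;-\; F^T \mathbf{x}_{\leq d},$$
so each entry has the shape $w_\alpha = x^\alpha + r_\alpha$, where $r_\alpha \in R_d$ depends affine-linearly on the entries of $g$: the $g$-dependent part is the $\alpha$-component of $g^T K_d^T \mathbf{x}_{\leq d}$, and the $g$-independent offset is $-(F^T \mathbf{x}_{\leq d})_\alpha$, a fixed polynomial determined by the points $\mathbf{z}_1,\dots,\mathbf{z}_r$ through Proposition \ref{prop:K}.

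Finally I would extract the degree-$(2d+1)$ homogeneous component of $q = \sum_\alpha w_\alpha^2$. Since $w_\alpha^2 = x^{2\alpha} + 2 x^\alpha r_\alpha + r_\alpha^2$ and $\deg r_\alpha \leq d$, only the cross term can contribute to degree $2d+1$, producing $2\sum_{|\alpha|=d+1} x^\alpha\, r_\alpha^{(d)}$, where $r_\alpha^{(d)}$ denotes the degree-$d$ part of $r_\alpha$. Because each $r_\alpha^{(d)}$ is affine-linear in the entries of $g$, the coefficients of this polynomial are affine-linear functions of $g$. Hence the vanishing conditions $\coeff(q,x^\beta) = 0$ for every $|\beta| = 2d+1$ translate directly into an inhomogeneous linear system in the unknowns $g$, which is exactly what the proposition claims. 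I expect no serious obstacle here; the only point worth checking is that the system is genuinely inhomogeneous rather than homogeneous, which follows from the $-F^T \mathbf{x}_{\leq d}$ summand in $w$ whose $g$-independent contribution to the degree-$(2d+1)$ coefficients produces the right-hand side.
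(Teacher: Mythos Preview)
Your proposal is correct and follows essentially the same route as the paper: factor $G$, compute the vector of polynomials $\mathbf{x}^T K_{d+1}\begin{bmatrix} g\\ I\end{bmatrix}$ using the block form of $K_{d+1}$, observe that each entry is $x^\alpha$ plus a degree-$\leq d$ part that is affine-linear in $g$, and note that only the cross terms contribute to degree $2d+1$. Your additional remark pinpointing the $-F^T\mathbf{x}_{\leq d}$ term as the source of the inhomogeneity is a nice touch that the paper leaves implicit.
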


\begin{proof}
Let $q = \bvec{x}^T \begin{bmatrix} K_d g - F \\ I \end{bmatrix} \begin{bmatrix} g^TK_d^T - F^T & I \end{bmatrix} \bvec{x}$ and consider the degree $d+1$ polynomials in the row vector, $\mathbf{x}^T \begin{bmatrix} K_d g - F \\ I \end{bmatrix}$. The degree $d+1$ components of these polynomials consist of a single monomial that is independent of $g_{i,j}$. The degree $d$ coefficients of these polynomials are inhomogeneous but linear in $g_{i,j}$. Because degree $2d+1$ coefficients of $q$ rely only on the product of degree $d$ and degree $d+1$ coefficients of the polynomials the values of $g$ satisfy an inhomogeneous linear system.
\end{proof}
 
In order for  the inhomogeneous  linear system of Proposition \ref{linear} to have a solution, it is sufficient that the corresponding homogeneous equations are linearly independent. Thus we should try to understand what is the coefficient matrix of this linear system and determine when it is full row rank. For this we first define the notion of a subresultant matrix. Subresultant matrices of homogeneous polynomials $h_1, \ldots, h_t$ play an important role in studying the homogeneous parts of the ideal $\langle h_1, \ldots , h_t\rangle$.
.

 \begin{definition} Let $h_1, \ldots, h_t\in R_{d}$ be homogeneous polynomials for some $t\in \N$, and let $\Delta\geq d$. The degree $\Delta$ subresultant matrix of  $h_1, \ldots, h_t$  is the matrix whose columns are the coefficient vectors of the multiples of each $h_i$ with all monomials of degree $\Delta-d$. For example, if $\Delta-d = d+1$ and the monomials of degree $d+1$ are $\{x^{\alpha_i}\}_{i=1}^s$ for $s=\dim R_{d+1}-\dim R_d$, then
$$
{\rm Sres}_{\Delta}(h_1, \ldots, h_t):=\begin{array}{|c|c|c||c||c|c|c|}
\cline{1-7} 
& & & &&& \\
& & & &&& \\  
x^{\alpha_1}h_1& \dots &x^{\alpha_s}h_1&\cdots&x^{\alpha_1}h_t& \dots &x^{\alpha_s}h_t\\
& & && & & \\
& & & &&& \\ 
 \cline{1-7}
 \multicolumn{1}{c}{}
\end{array}.
$$
\end{definition}

\begin{thm} \label{resultant} Let $d, n, r$ be as above. Denote 
$$t:=\dbinom{n+d}{n}-r.
$$
Let $G = \begin{bmatrix} g\\ I\\ \end{bmatrix} \begin{bmatrix} g^T & I\\ \end{bmatrix}$ be a matrix satisfying Corollary \ref{cor:assumption} and $K_d$ be the matrix from Proposition \ref{prop:K}. Denote the entries of $g$ by $g_{i,j}$ for $i=1,\ldots, t$ and  $j=1, \ldots, \dbinom{n+d}{d+1}$, and denote the entries  of $K_d$  by $k_{i, \beta}$ for $i=1, \ldots, t$ and $|\beta|\leq d$. We define the homogeneous degree $d$ polynomials:
$$
h_i:=\sum_{|\beta|= d} k_{i, \beta} x^\beta \quad i=1, \ldots, t.
$$ Then the coefficient matrix of the linear system in Proposition  (\ref{linear}) in the variables $\{g_{i,j}\}$ is ${\rm Sres}_{2d+1}(h_1, \ldots, h_t)$.
\end{thm}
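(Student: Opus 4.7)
My plan is to unpack the polynomial identity $q = \sum_{|\alpha|=d+1} q_\alpha^2$ from Corollary \ref{cor:assumption} with the restricted form of $G$, identify the degree-$(2d+1)$ part of $q$ as a bilinear expression in the $g_{i,j}$ and the $h_i$, and recognize that ``coefficient of $x^\gamma$ equals $0$'' is exactly the linear system whose matrix is $\mathrm{Sres}_{2d+1}(h_1,\ldots,h_t)$.

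First I would make the polynomials $q_\alpha$ explicit. Using $K_{d+1}=\begin{bmatrix}K_d & -F\\0 & I\end{bmatrix}$ from Proposition \ref{prop:K}, the row vector $\mathbf{x}^T K_{d+1}\begin{bmatrix}g\\I\end{bmatrix}=\mathbf{x}^T\begin{bmatrix}K_d g-F\\I\end{bmatrix}$ has for each degree-$(d+1)$ monomial $x^{\alpha_j}$ an entry
$$q_{\alpha_j}\;=\;x^{\alpha_j}+\sum_{|\beta|\leq d}\Bigl(\sum_{i=1}^t g_{i,j}\,k_{i,\beta}-F_{\beta,j}\Bigr)x^{\beta}.$$
In particular, the degree-$(d+1)$ part of $q_{\alpha_j}$ is the single monomial $x^{\alpha_j}$ (independent of $g$), and its degree-$d$ homogeneous part is
$$\bigl(q_{\alpha_j}\bigr)_d\;=\;\sum_{i=1}^t g_{i,j}\,h_i\;-\;\bigl(F_{\cdot,j}\bigr)_d,$$
where $(F_{\cdot,j})_d:=\sum_{|\beta|=d}F_{\beta,j}x^{\beta}$ is a fixed polynomial independent of $g$.

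Next I would extract the degree-$(2d+1)$ component of $q=\sum_\alpha q_\alpha^2$. Because $q_{\alpha_j}$ has its top part $x^{\alpha_j}$ of degree $d+1$ with nothing in between $d$ and $d+1$, the only way to obtain a degree-$(2d+1)$ monomial in $q_{\alpha_j}^2$ is to multiply the top part by the degree-$d$ part; all other cross-terms give degrees $\leq 2d$ or the single degree-$(2d+2)$ term $x^{2\alpha_j}$. Thus
$$q_{2d+1}\;=\;2\sum_{|\alpha|=d+1} x^{\alpha}\,(q_\alpha)_d\;=\;2\sum_{|\alpha|=d+1}\sum_{i=1}^t g_{i,j(\alpha)}\,x^{\alpha}h_i\;-\;2\sum_{|\alpha|=d+1}x^{\alpha}(F_{\cdot,j(\alpha)})_d,$$
and the constraint $q_{2d+1}=0$ is precisely an inhomogeneous linear system in the unknowns $g_{i,j}$ with right-hand side determined by $F$.

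Finally I would recognize the coefficient matrix. Viewing the left-hand side as a linear map $g\mapsto \sum_{i,j} g_{i,j}\,x^{\alpha_j}h_i$ from $\mathbb{R}^{t\cdot \binom{n+d}{d+1}}$ into the space of degree-$(2d+1)$ homogeneous polynomials, and expressing this map in the monomial basis of degree $2d+1$, its matrix has one column per pair $(i,j)$ equal to the coefficient vector of $x^{\alpha_j}h_i$. This is exactly the definition of $\mathrm{Sres}_{2d+1}(h_1,\ldots,h_t)$, with columns ordered by $(h_i,x^{\alpha_j})$. The main bookkeeping issue — and the only real thing to check — is that only the degree-$d$ homogeneous parts $h_i$ of the $K_d$-columns survive into the coefficient matrix (lower-degree parts and all of $F$ contribute only to the right-hand side), which is immediate from the degree count above.
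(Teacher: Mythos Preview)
Your proposal is correct and follows essentially the same approach as the paper: both arguments observe that in $q=\sum q_\alpha^2$ the degree-$(2d+1)$ contribution comes solely from $2x^{\alpha_j}\cdot(q_{\alpha_j})_d$, that the $F$-terms land in the right-hand side, and that the column of the variable $g_{i,j}$ is therefore the coefficient vector of $x^{\alpha_j}h_i$. You are simply more explicit about the factor of $2$ and the right-hand side than the paper, which compresses these remarks into the line ``normal form coefficients only appear in the constant terms.''
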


\begin{proof}
First note that the normal form coefficients only appear in the constant terms, so do not appear in the coefficient matrix. The rows of the coefficient matrix correspond to monomials $x^\beta$ of degree $|\beta| = 2d+1$. For each $j\in \{1, \ldots, {{n+d}\choose{d+1}}\}$ associate with it a unique monomial of degree $d+1$, $\alpha_j$. For fixed $i\in \{1, \ldots,t\}$ and $j\in \{1, \ldots, {{n+d}\choose{d+1}}\}$, the column corresponding to the variable $g_{i,j}$ has zero entry in the row corresponding to $x^\beta$ unless $x^{\alpha_j}$ divides $x^\beta$. If $x^{\alpha_j}|x^\beta$ then the entry is $k_{i, \beta-\alpha_j}$, which shows that the column of $g_{i,j}$ is the coefficient vector of $x^{\alpha_j}h_i$.  
\end{proof}

\begin{cor}\label{cor:sres} Let   $M_{d+1}=V_{d+1} \Lambda V_{d+1}^T$ as above,  with $V_{d+1}$ the Vandermonde matrix of $r$ real points, and we assume that $V_d^T$ has full column rank. Define the homogeneous degree $d$ polynomials $h_1, \ldots, h_t$ from ${\rm Ker}(V_d^T)$ as in Theorem \ref{resultant}. Then   $M_{d+1} \in \mathcal{P}^*$ if $Sres_{2d+1}(h_1, \ldots, h_t)$ has full row rank.
\end{cor}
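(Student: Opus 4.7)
The plan is to chain together the results that are already in place: Corollary \ref{cor:assumption} reduces the optimality of $M_{d+1}$ to the existence of a matrix $g$ of the prescribed shape, Proposition \ref{linear} says that the conditions on $g$ form an inhomogeneous linear system, and Theorem \ref{resultant} identifies the coefficient matrix of that linear system with $\mathrm{Sres}_{2d+1}(h_1,\ldots,h_t)$. So the corollary becomes a one-line solvability statement: an inhomogeneous linear system $A\,\mathrm{vec}(g) = b$ always has a solution as soon as $A$ has full row rank, because then the columns of $A$ span the full ambient space indexed by the degree $2d+1$ monomials, and in particular the target vector $b$ coming from the known coefficients lies in $\mathrm{Im}(A)$.

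More concretely, I would organize the argument as follows. First I would recall that, by hypothesis, $V_d^T$ has full column rank, so the kernel of $V_d$ has dimension $t = \binom{n+d}{n}-r$ and Proposition \ref{prop:K} produces a matrix $K_{d+1} = \begin{bmatrix} K_d & -F \\ 0 & I\end{bmatrix}$, from which the homogeneous polynomials $h_1,\ldots,h_t$ of degree $d$ in Theorem \ref{resultant} are read off as the top (degree $d$) parts of the columns of $K_d$. Next I would invoke Corollary \ref{cor:assumption}: it suffices to produce $g$ of size $t\times\binom{n+d}{d+1}$ such that all degree $2d+1$ coefficients of $\mathbf{x}^TK_{d+1}GK_{d+1}^T\mathbf{x}$ vanish, where $G = \begin{bmatrix} g\\I\end{bmatrix}\begin{bmatrix} g^T & I\end{bmatrix}$. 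Then I would quote Proposition \ref{linear} to recast this as an inhomogeneous linear system in the entries $\{g_{i,j}\}$, and Theorem \ref{resultant} to identify its coefficient matrix with $\mathrm{Sres}_{2d+1}(h_1,\ldots,h_t)$.

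The conclusion is then pure linear algebra: if $\mathrm{Sres}_{2d+1}(h_1,\ldots,h_t)$ has full row rank, then for every prescribed right-hand side (in particular the one coming from the fixed degree $d+1$ leading parts and the normal-form contributions $F$) the system admits a solution $g$. That solution yields a valid $G$, hence by Corollary \ref{cor:assumption} we get $M_{d+1}\in \mathcal{P}^*$.

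There is essentially no hard obstacle here, since all nontrivial content has been pushed into the preceding results; the only thing to be mindful of is bookkeeping. In particular, I would want to double-check that the right-hand side of the linear system really is in the row-indexing ambient space (i.e.\ that it is genuinely a degree $2d+1$ coefficient vector), so that ``full row rank $\Rightarrow$ solvable for every right-hand side'' applies verbatim. That amounts to checking that the contributions from the identity block of $G$ and from $F$ only affect the constant term of the linear system and not its coefficient matrix, which is exactly what the proof of Proposition \ref{linear} establishes. With that verified, the corollary follows immediately.
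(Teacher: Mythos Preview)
Your proposal is correct and follows exactly the chain of reasoning the paper intends: the corollary is stated without proof precisely because it is immediate from Corollary~\ref{cor:assumption}, Proposition~\ref{linear}, and Theorem~\ref{resultant}, together with the remark preceding the subresultant definition that solvability of the inhomogeneous system is guaranteed once its coefficient matrix has full row rank. There is nothing to add.
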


In the rest of this subsection we  study when the rows of the subresultant matrix are independent. Note that the rows are independent if and only if 
\begin{eqnarray}\label{ideal}
\langle h_1, \ldots, h_t\rangle_{2d+1} = R_{=2d+1},
\end{eqnarray}
where the left hand side denotes the homogeneous part of degree $2d+1$ of the ideal generated by $h_1, \ldots, h_t$, and the right hand side denotes the space of homogeneous polynomials of degree $2d+1$. Thus (\ref{ideal}) is satisfied only if  $2d+1$ is greater or equal than the {\em regularity index} of $\langle h_1, \ldots, h_t\rangle$, i.e. the smallest degree where the Hilbert function of the ideal agrees with its Hilbert polynomial.  Note that if $h_1, \ldots, h_t$ has common roots in the projective space over $\C$ then (\ref{ideal}) can never be satisfied, which implies that we need to have $t\geq n$. 

For the rest of the section we assume that $h_1, \ldots, h_t$ is a system such that the dimension of  $\langle h_1, \ldots, h_t\rangle_{2d+1}$ is the maximum possible. In the results below we give specific constructions of particular real systems $h^*_1, \ldots, h^*_t$ and study when we have  $\langle h^*_1, \ldots, h^*_t\rangle_{2d+1} = R_{=2d+1}$. Therefore, if we assume that our $\langle h_1, \ldots, h_t\rangle_{2d+1}$ is maximal, then it will also imply that $\langle h^*_1, \ldots, h^*_t\rangle_{2d+1} = R_{=2d+1}$.

\begin{rema}
In \cite{Pardue2010} it was shown that systems $h_1, \ldots, h_t$ for which $\langle h_1, \ldots, h_t\rangle_{2d+1}$ is not maximal are defined by non-trivial polynomial equations, so over $\C$ they form a Zariski closed subset. However, even for the ``generic" case over $\C$, the behavior of $\langle h_1, \ldots, h_t\rangle_{2d+1}$ is not well understood.  In  \cite{Froberg1985} they give a conjecture about the Hilbert series of generic  systems over $\C$.
\end{rema}

The regularity index of $n\times n$ homogeneous systems were widely studied in the literature, but for highly overdetermined systems that has Hilbert series as in Fr\"oberg's conjecture in \cite{Froberg1985} only the  asymptotic behavior of the regularity index is known as $n\rightarrow \infty$ (c.f. \cite{BFS2004,BFS2013}).

The next theorem gives all values of $d$ and $n$  when (\ref{ideal}) is satisfied in the cases when $t= n$ and $t= n+1$. The analysis of the cases when $t>n+1$ is still ongoing. Since    $r=\dbinom{n+d}{n}-t$, we can easily translate these results in terms of the Gramian rank $r$.  Finally, we want to note that on the other end of the spectrum, when $t=\dbinom{n+d}{n-1}=\dim R_{=d}$ and $h_1, \ldots, h_t$ are generic, then the coefficient vectors of $h_1, \ldots, h_t$ form a square full rank matrix,  thus (\ref{ideal}) is satisfied for all $n$ and $d$. However in this case  $r=\dbinom{n+d-1}{n}=\dim R_{d-1}$, and the matrices $M_{d-1}$ and $M_d$ already satisfy the stopping criterion for flat extension, so we do not need an extension to $M_{d+1}$.

\begin{propo}\label{prop:fullrank} Let $h_1, \ldots, h_t$ be homogeneous polynomials of degree $d$ in $n$ variables, and assume that $\langle h_1, \ldots, h_t\rangle_{2d+1}$ is maximal. Then  
$
\langle h_1, \ldots, h_t\rangle_{2d+1} = R_{=2d+1}
$
if
\begin{enumerate}
\item  in the case of $t=n$
\begin{eqnarray*}
 &&n=2 \text{ for arbitrary } d,\\
&&n=3 \text{ and }d\leq 3, \\
&&n=4 \text{ and }d\leq 2,\\
&&n\geq 5  \text{ and } d=1. 
\end{eqnarray*}

\item in the case of $t=n+1$
\begin{eqnarray*}
 &&n=2 \text{ or } 3 \text{ for arbitrary } d,\\
&&n=4 \text{ and }d\leq 6, \\
&&n=5 \text{ and }d\leq 3,\\
&&n=6,7,8 \text{ and }d\leq 2,\\
&&n\geq 9  \text{ and } d=1.  
\end{eqnarray*}

\end{enumerate}

\end{propo}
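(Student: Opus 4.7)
The starting point is that $\dim\langle h_1,\ldots,h_t\rangle_{2d+1}$ is the rank of the multiplication map $\bigoplus_{i=1}^t R_{=d+1}\to R_{=2d+1},\ (g_1,\ldots,g_t)\mapsto\sum_i g_i h_i$, whose matrix has entries linear in the coefficients of the $h_i$. This dimension is therefore upper semicontinuous in $(h_1,\ldots,h_t)$ and is trivially bounded by $\dim R_{=2d+1}$. Under the maximality hypothesis, to prove the desired equality $\langle h_1,\ldots,h_t\rangle_{2d+1}=R_{=2d+1}$ in any given case it will be enough to exhibit a single concrete system $h_1^*,\ldots,h_t^*$ of degree-$d$ homogeneous polynomials satisfying $\langle h_1^*,\ldots,h_t^*\rangle_{2d+1}=R_{=2d+1}$, since then the maximal rank must in fact equal $\dim R_{=2d+1}$.

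For part~(1), when $t=n$, the plan is to take $h_i^*:=x_i^d$. A monomial $x^\alpha$ of degree $2d+1$ lies in $\langle x_1^d,\ldots,x_n^d\rangle$ exactly when some $\alpha_i\geq d$, so the only possible obstruction is a monomial with $|\alpha|=2d+1$ and $\alpha_i\leq d-1$ for all $i$. No such $\alpha$ exists iff $n(d-1)<2d+1$, equivalently $d(n-2)<n+1$, and a direct check of this inequality recovers the list in part~(1).

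For part~(2), when $t=n+1$, my choice would be $h_i^*:=x_i^d$ for $i=1,\ldots,n$ together with the extra generator $h_{n+1}^*:=(x_1+\cdots+x_n)^d$, a $d$-th power of a generic linear form. By classical results on ideals of powers of generic linear forms (Macaulay inverse systems and apolarity, e.g.\ Stanley, Iarrobino--Emsalem), the Hilbert function of $R/\langle h_1^*,\ldots,h_{n+1}^*\rangle$ in degree $k$ equals $\max(0,a_k-a_{k-d})$, where $a_k:=[z^k](1+z+\cdots+z^{d-1})^n$. Setting $k=2d+1$, the desired vanishing reduces to $a_{2d+1}\leq a_{d+1}$, and using the symmetry $a_k=a_{n(d-1)-k}$ together with the (log-concave) unimodality of $(a_k)_k$ this inequality can be checked by comparing distances to the peak $n(d-1)/2$. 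The $(n,d)$ with $n(d-1)<2d+1$ are already covered by part~(1); the remaining listed cases (chiefly $(4,3),\ldots,(4,6)$, $(5,3)$, $(n,2)$ for $n\in\{6,7,8\}$, and the infinite families $n=2,3$ for all $d$) each reduce either to Anick's theorem (which validates Fr\"oberg's conjecture for $n\leq 3$) or to a short direct numerical verification of $a_{2d+1}\leq a_{d+1}$.

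The main difficulty is rigorously pinning down the Hilbert function of the specific linear-power ideal in part~(2), since Fr\"oberg's conjecture is open in general. The cleanest route is to split the argument: invoke Anick's theorem to dispose of $n\leq 3$ uniformly in $d$, and for each remaining $(n,d)$ in the list carry out a direct Hilbert-series computation of $\langle x_1^d,\ldots,x_n^d,(x_1+\cdots+x_n)^d\rangle$ in degree $2d+1$. Because the ideal is very structured and the number of outstanding pairs is small, each such check reduces to a short numerical verification, making this step laborious but not conceptually deep.
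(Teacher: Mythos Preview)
Your strategy matches the paper's almost exactly: the same test systems $x_i^d$ (plus $(x_1+\cdots+x_n)^d$ for $t=n+1$), the same reduction to the inequality $a_{2d+1}\le a_{d+1}$ for the Hilbert function $a_k$ of the monomial complete intersection, and the same appeal to the symmetry $a_k=a_{n(d-1)-k}$ and unimodality of that sequence. Where you hedge on justifying the Hilbert-function formula and fall back on Anick's theorem for $n\le 3$ plus finitely many direct checks, the paper instead cites Watanabe's result that multiplication by $(x_1+\cdots+x_n)^d$ on $R/\langle x_1^d,\ldots,x_n^d\rangle$ has maximal rank (a Strong Lefschetz statement for the monomial complete intersection), which establishes $\dim(R/I)_k=\max(0,a_k-a_{k-d})$ uniformly and lets one read off the closed-form bound $d\le (n+2)/(n-3)$ for $n\ge 4$ without any case-by-case verification.
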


\begin{proof} First note that if we find a particular system $h_1^*, \ldots, h^*_t$ of degree $d$ that satisfy $\langle h^*_1, \ldots, h^*_t\rangle_{2d+1} = R_{=2d+1}$, then any generic $h_1, \ldots, h_t$ will also satisfy it. For $t=n$, the standard theory of subresultants uses the system
$$
h_1^*:=x_1^d, \ldots, h_n^*:=x_n^d.
$$
Then one can define 
$$
\delta:= n(d-1),
$$
and it is easy to see that if $\Delta\geq \delta+1$ then the matrix ${\rm Sres}_{\Delta}(h^*_1, \ldots, h^*_n)$ has more columns than rows and contains the identity matrix, so it has full row rank. Thus we need that $2d+1\geq \delta+1$ and that is only satisfied in the cases listed in the claim. 

For $t=n+1$ we will use the system 
$$
h_1^*:=x_1^d, \ldots, h_n^*:=x_n^d, \;h_{n+1}^*:=(x_1+\ldots+x_n)^d.
$$
Let 
$$H_d(\nu):={\rm span}\{x^\gamma\;:\; |\gamma|=\nu, \;\forall i\; \gamma_i<d \}
$$
and denote by ${\mathcal{H}}_d(\nu):=\dim H_d(\nu)$. Clearly, the monomials not in $H_d(\nu)$ generate $\langle x_1^d, \ldots, x_n^d\rangle_\nu$. 
Define the linear map 
\begin{eqnarray*}
\psi_{h_{n+1}^*} : {H}_{d}(d+1)&\rightarrow&  {H}_{d}(2d+1)\\
x^{\alpha} &\mapsto&
x^{\alpha}\cdot h_{n+1}^* \;\;\;\;\mod \langle
x_1^{d}, \ldots, x_n^{d}\rangle_{2d+1}.
\end{eqnarray*}
 By \citep[Corollary 3.5 and Theorem 3.8.(0)][]{Wat}, the matrix of the  map $\psi_{h_{n+1}^*}$ has full rank. So if 
 \begin{eqnarray}\label{Hd}
 {\mathcal{H}}_d(2d+1)\leq {\mathcal{H}}_d(d+1)
 \end{eqnarray}
  then 
 $\psi_{h_{n+1}^*}$ is surjective, and ${\rm Sres}_{\Delta}(h^*_1, \ldots, h^*_n)$ has full row rank. Using the fact that
${\mathcal H}_{d}(\nu)={\mathcal H}_{d}(\delta-\nu)$ and that
${\mathcal H}_{d}(\nu)$ is monotonically decreasing in  $[\lceil\frac{\delta}{2}\rceil, \delta]$,  we get that (\ref{Hd}) is satisfied
when either $\lceil\frac{\delta}{2}\rceil\leq d+1\leq 2d+1$ or $\lceil\frac{\delta}{2}\rceil\leq \delta-(d+1)\leq 2d+1$. This is always satisfied if $n\leq 3$ and for $n\geq 4$ it reduces to $d\leq \frac{n+2}{n-3}$, resulting in the  values in the claim. 
\end{proof}

A different approach was presented in  \citep[Theorem 6][]{FroOttSha2012}, where they studied the minimal number $t$ such that a generic homogeneous form in $n$ variables of degree $kd$ is a sum of the $k$-th powers of $t$ forms of degree $d$ over $\C$. For the case of $k=2$ they prove that for $$t=2^{n-1}$$ and generic  $h_1, \ldots, h_t\in R_{=d}$ we have 
$$
\langle h_1, \ldots, h_t\rangle_{2d} = R_{=2d},
$$
which is slightly stronger than what we need in (\ref{ideal}). Moreover, their construction for $k=2$ works over the reals, in particular, they show that  the following $2^{n-1}$ real polynomials
$$
h_I^*:=\left(x_1+\sum_{i\in I} x_i- \sum_{j\not\in I} x_j\right)^d \quad \text{ for all } \; I\subseteq \{2, \ldots, n\}
$$
will generate $R_{=2d}$ in degree $2d$.  Moreover,  they show that there is an open subset of all real polynomials of degree $2d$  where the "typical rank"   is $2^{n-1}$, but there might be other "typical ranks" too (see also \cite{ComonOttaviani2009} on typical ranks over $\R$).   They also show that for large enough $d$ the  $t=2^{n-1}$ upper bound is sharp,  but for small $d$ this bound is not always sharp. 

\section[Cases When Md+1 is Never Optimal]{Cases When $M_{d+1}$ is Never Optimal}\label{sect:notopt}
In the previous section we explored cases of triplets $(n,d,r)$ where we can generically prove that $M_{d+1}$ is optimal for $(\mathcal{P})$ and list these cases. In this section we try to demonstrate that there are instances of values of $(n,d,r)$  where we expect that $M_{d+1}$ is not the optimum for  $\mathcal{(P)}$, or more precisely, we expect not to be able to find any $S$ satisfying Theorem \ref{thm1}. We demonstrate this  by simply counting the degrees of freedom and number of constraints in Theorem \ref{thm:sos}, without proving that these constrains are in fact linearly independent.

In Theorem \ref{thm:sos} there are a total of ${{n+d+1}\choose{d+1}}{{n+d}\choose{d+1}}$ coefficients, or degrees of freedom, coming from the ${{n+d}\choose{d+1}}$ polynomials. To eliminate symmetry of the system, we count $\frac{{{n+d}\choose{d+1}}({{n+d}\choose{d+1}}-1)}{2}$ constraints. Each of the polynomials is constrained to vanish on $r$ vanishing points, providing $r {{n+d}\choose{d+1}}$ constraints. An additional $ {{n+2d}\choose{2d+1}} + {{n+2d+1}\choose{2d+2}}$ constraints come from coefficient constraints on degree $2d+1$ and $2d+2$ monomials in the sum of squares.

This system is overconstrained if $$r {{n+d}\choose{d+1}} + {{n+2d}\choose{2d+1}} + {{n+2d+1}\choose{2d+2}} > {{n+d+1}\choose{d+1}}{{n+d}\choose{d+1}} - \frac{{{n+d}\choose{d+1}}({{n+d}\choose{d+1}}-1)}{2}.$$ Solving for $r$ we find that the system is overconstrained if $$r > \frac{{{n+d+1}\choose{d+1}}{{n+d}\choose{d+1}} - \frac{{{n+d}\choose{d+1}}({{n+d}\choose{d+1}}-1)}{2} - {{n+2d}\choose{2d+1}} - {{n+2d+1}\choose{2d+2}}}{{{n+d}\choose{d+1}}}.$$

Asymptotically, these bounds are not applicable due to the limitation that $r$ is less than the size of $M_d$, but there are instances where this bound is applicable.

One instance is when $n=2$ and $d=3$, the bound indicates that $M_{d+1}$ will generally not be optimal when $r=10$ as the linear system is overdetermined. This triplet of $(n,d,r)$ is a case where the corresponding decompositions are unique, and $\operatorname{rank}(M_d) = r$, but $M_{d+1}$ will generically not be optimal for $\mathcal{P}$.

\section{Uncertain Cases}

Outside of the cases listed in Sections \ref{sect:opt} and \ref{sect:notopt}, the possibility of $M_{d+1}$ being optimal in $P$ may depend on more than just the triplet $(n,d,r)$. Instances may depend fundamentally on the sets of points $\{\bvec{z}_i\}$. To demonstrate this we present two examples in the same triplet $(n,d,r)$ where one example has $M_{d+1}$ optimal, and one does not.

Let us consider the case when $n=2,d=3$. In this case, ${\rm size}(M_{d}) = 10$, and ${\rm size}(M_{d+1}) = 15$. A discussion of the extreme rays in this case can be found in \cite{blekherman2012nonnegative}. Gramian rank $10$ decompositions will generally not be optimal solutions in $(\mathcal{P})$ as the linear system in Corollary \ref{thm:sos} is overcomplete. Gramian rank $8$ decompositions will generically be optimal in $(\mathcal{P})$ from Corollary \ref{cor:sres} and Proposition \ref{prop:fullrank}. Between these two ranks we wish to understand what happens. Here we present two examples of Gramian rank $9$ decompositions, one where $\exists S$ satisfying Theorem \ref{thm1}, and one where $\nexists S$ satisfying Theorem \ref{thm1}.

\begin{example}
Let $\{\bvec{z}_i\} = \{(78, 87), (-45, 78), (-38, 32), (91, -76), (-18, 94), (-22, -22), (27, 99), (52, -16),\\ (-58, -87)\}$ be the set of $r=9$ points, and let $\lambda_i=1$ for $i=\{1, \hdots , 9 \}$. In this case $\exists S$ satisfying Theorem \ref{thm1} and $M_{d+1}$ is optimal in $(\mathcal{P})$. 
\end{example}

\begin{example}
Let $\{\bvec{z}_i\} = \{(-43, -34), (-18, -10), (-19, 23), (52, 72), (-66, -76), (48, -15), (35, 45),\\ (-83, -72), (51, 22)\}$ be the set of $r=9$ points, and let $\lambda_i=1$ for $i=\{1, \hdots , 9 \}$. In this case $\nexists S$ satisfying Theorem \ref{thm1} and $M_{d+1}$ is not optimal in $(\mathcal{P})$. In this instance, the optimal solution is rank 11.
\end{example}

These examples demonstrate the complexity of the cases where Proposition \ref{prop:fullrank} does not hold, as the solution to the relaxed problem may or may not be optimal in the original problem. In these cases the triplet $(n,d,r)$ is not sufficient to determine if $M_{d+1}$ is optimal in $(\mathcal{P})$ and specific information of the points is necessary.

\subsection{Future Work}

Some of the methods used to search for certificates of optimality also suggest future research avenues. Given an instance of a specific $(n,d,r)$ and $\{\bvec{z}_i\}$ the standard approach to search for a certificate of optimality using Theorem \ref{thm1} involves two steps. First, we solve the under-determined linear system $\operatorname{coeff}(\mathbf{x}^T S \mathbf{x}, x^\beta) = 0 \text{ for } |\beta| = 2d+1$. With the resulting affine solution, we look for an intersection with the positive semidefinite cone. 

Let us consider for a moment the set $T = \{ A ~ | ~ A \succeq 0, ~ A = I - \sum_{\substack{ |\alpha| = 2d+2, \\1 \leq i \leq \operatorname{dim}({\mathcal S}_\alpha)-1}} z_{\alpha,i} \tilde{Z}_{\alpha,i} \}$. We know that $I \in T$ with $z_{\alpha,i}=0$ for  $|\alpha| = 2d+2$ and $1 \leq i \leq \operatorname{dim}({\mathcal S}_\alpha)-1$, so the set is nonempty and has an interior. Another interesting observation is that this set is bounded.
\begin{thm}\label{bounded}
Fix $n$ and $d$ and an orthonormal basis $\{\tilde{Y}_{\alpha} \} \cup \{ \tilde{Z}_{\alpha,i} \}$ for $|\alpha| = 2d+2$ and $1 \leq i \leq \operatorname{dim}({\mathcal S}_\alpha)-1$. Then the set 
$$I - \sum_{\substack{ |\alpha| = 2d+2, \\1 \leq i \leq \operatorname{dim}({\mathcal S}_\alpha)-1}} z_{\alpha,i} \tilde{Z}_{\alpha,i} \succeq 0$$
is bounded.
\end{thm}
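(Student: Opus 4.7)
My plan is to identify the feasible region as a closed convex slice of the positive semidefinite cone whose recession cone is trivial, which yields boundedness by the standard characterization of boundedness for convex sets.

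First, a block-reduction. For $|\alpha|=2d+2$, every pair $(\gamma,\delta)$ indexing $\mathcal{S}_\alpha$ must satisfy $|\gamma|,|\delta|\le d+1$ with $|\gamma|+|\delta|=2d+2$, forcing $|\gamma|=|\delta|=d+1$. Hence each $\tilde Z_{\alpha,i}$ is supported in the principal submatrix of size $\binom{n+d}{d+1}$ indexed by monomials of exact degree $d+1$. Consequently $A:=I-\sum z_{\alpha,i}\tilde Z_{\alpha,i}$ is block-diagonal with the identity on the low-degree block and only the top block $A_{d+1}$ varying with $z$; thus $A\succeq 0\iff A_{d+1}\succeq 0$, and since $\{\tilde Z_{\alpha,i}\}$ is linearly independent, the $z$-set is bounded iff the feasible $A_{d+1}$'s form a bounded set.

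By the orthogonality $\tilde Z_{\alpha,i}\perp Y_\alpha$ guaranteed by Definition~\ref{orth}, the affine parametrization of $A_{d+1}$ is equivalent to the linear constraints $\langle Y_\alpha,A_{d+1}\rangle=\langle Y_\alpha,I\rangle=\delta_{2\vert\alpha}$ for all $|\alpha|=2d+2$. Being closed and convex, the feasible set is bounded iff its recession cone is trivial, i.e., iff any $X\succeq 0$ with $\langle Y_\alpha,X\rangle=0$ for all $|\alpha|=2d+2$ is zero.

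I prove this by induction on $\gamma$ with $|\gamma|=d+1$ in decreasing lexicographic order, using only the equations indexed by $\alpha=2\gamma$. For the largest $\gamma^*=(d+1,0,\ldots,0)$, the bounds $\gamma'_1,\delta'_1\le d+1$ with $\gamma'_1+\delta'_1=2d+2$ force the unique pair with $\gamma'+\delta'=2\gamma^*$ to be $(\gamma^*,\gamma^*)$, so $\langle Y_{2\gamma^*},X\rangle=X_{\gamma^*,\gamma^*}=0$, and PSD zeros the $\gamma^*$-row and column. For the inductive step at $\gamma$: in any pair $\{\gamma',\delta'\}$ with $\gamma'+\delta'=2\gamma$ and $\gamma'\ne\delta'$, we have $\gamma'-\gamma=-(\delta'-\gamma)$, so exactly one of the two is lex greater than $\gamma$, and by the inductive hypothesis the corresponding row of $X$ is $0$. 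Thus
\[\langle Y_{2\gamma},X\rangle=X_{\gamma,\gamma}+2\sum_{\{\gamma',\delta'\}:\,\gamma'+\delta'=2\gamma,\,\gamma'\ne\delta'}X_{\gamma',\delta'}=X_{\gamma,\gamma}=0,\]
and PSD again zeros the $\gamma$-row and column. After all such $\gamma$ have been processed, $X$ vanishes on the top block.

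The main obstacle is the setup rather than the combinatorics: one must observe that only the block of degree-$(d+1)$ monomials is affected, that orthogonality to $Y_\alpha$ dualizes cleanly into anti-diagonal sum equations, and that the induction needs only the equations with $\alpha=2\gamma$ (the remaining ones, with $\alpha$ not twice an integer vector, impose no diagonal constraint and are automatically satisfied once higher-lex rows have vanished). Once these reductions are in place, the rest follows from the textbook fact that $X\succeq 0$ with $X_{ii}=0$ implies $X_{i,\cdot}=0$.
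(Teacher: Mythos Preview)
Your proof is correct. Both your argument and the paper's reduce to showing that the recession cone of the feasible set is trivial, i.e., that any $Z\succeq 0$ lying in $\operatorname{span}\{\tilde Z_{\alpha,i}\}$ (equivalently, satisfying $\langle Y_\alpha,Z\rangle=0$ for all $|\alpha|=2d+2$) must vanish. Where you diverge is in how this is established. The paper invokes duality in the PSD cone: it exhibits a strictly positive definite \emph{moment} matrix $Y=\sum y_\alpha \tilde Y_\alpha\succ 0$ (obtained as $V^T V$ from a generic set of points), and then $\langle Y,Z\rangle=0$ with $Y\succ 0$, $Z\succeq 0$ forces $Z=0$ in one line. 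You instead give a direct combinatorial induction in decreasing lex order on $\gamma$, using only the constraints $\langle Y_{2\gamma},X\rangle=0$ and the observation that for $\gamma'+\delta'=2\gamma$ with $\gamma'\neq\delta'$ exactly one of $\gamma',\delta'$ is lex-larger than $\gamma$; this peels off the diagonal entries one at a time and uses PSD to kill the corresponding rows. Your route is more elementary and self-contained (no appeal to generic Vandermonde matrices), while the paper's is shorter and highlights the underlying convex-duality reason: the span of the $\tilde Y_\alpha$'s meets the interior of the PSD cone.
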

\begin{proof}
Choose $Y = \sum_{|\alpha| = 2d+2} y_{\alpha} \tilde{Y}_{\alpha} \succ 0$ to be a full rank positive definite matrix.  A generic collection of ${{n+d+1}\choose{d+1}}$ points will produce such a matrix. Suppose that there exists a matrix $Z = - \sum_{\substack{ |\alpha| = 2d+2, \\1 \leq i \leq \operatorname{dim}({\mathcal S}_\alpha)-1}} z_{\alpha,i} \tilde{Z}_{\alpha,i}$ such that $I+sZ \succeq 0$ for $s>0$, then $Z \succeq 0$. But $<Y,Z>=0$ by construction, therefore $Z=0$ and the set is bounded.
\end{proof}

Using these observations of $T$, we can make some conclusions from the solution of our linear system. For instance, if the solution to the linear system can be solved independent of the $z_{\alpha,i}$, then Corollary \ref{cor:assumption} applies and $M_{d+1}$ is optimal. Alternatively if the $z_{\alpha,i}$ variables are necessary, but yield a solution such that $\sum z_{\alpha,i}^2 < 1$, then $M_{d+1}$ will be optimal since $I - \sum_{\substack{ |\alpha| = 2d+2, \\1 \leq i \leq \operatorname{dim}({\mathcal S}_\alpha)-1}} z_{\alpha,i} \tilde{Z}_{\alpha,i} \succ 0$. Lastly, if the diameter of the set in Theorem \ref{bounded} is $\operatorname{diam}(n,d)$, then if the solution of the linear system closest to the origin has $\sum z_{\alpha,i}^2 > \operatorname{diam}(n,d)^2$ then Theorem \ref{thm1} cannot apply.

Studying the sets of $\{\bvec{z}_i\}$ that provide instances of each of these cases will be a topic for future exploration. Future research may also extend the idea further with a linear programming relaxation. The additional constraint $\sum \abs{ z_{\alpha,i}} < 1$ is linear and such a solution also guarantees the optimality of $M_{d+1}$ in $(\mathcal{P})$.

Additionally, we are interested in the rank of the optimal solutions in the cases when $M_{d+1}$ is not optimal in the minimal nuclear norm problem. One approach to address this question may be to examine the extremal rays in the feasible set of $\mathcal{P}$. This may provide a meaningful upper bound for the rank of the optimal solution, as $\mathcal{P}^*$ must contain an extremal ray.


\section{Conclusion}
In this paper we study the Gramian decomposition of tensors and polynomials by posing a rank optimization problem. Through relaxation of the optimization problem, we pose a convex optimization problem to approximate the minimal rank solution. Our analysis of the relaxed problem reveals a relation between the Gramian decomposition problem and the theory of subresultants. Our research further provides specific cases where the optimal solution to our relaxation is also minimum rank. Lastly we provide some interesting cases demonstrating the complexity of the problem and discuss future work.

\bibliographystyle{tfnlm}

\begin{thebibliography}{10}
\providecommand{\url}[1]{\normalfont{#1}}
\providecommand{\urlprefix}{Available from: }

\bibitem{ComGolLimMour2008}
Comon~P, Golub~G, Lim~LH, et~al. Symmetric tensors and symmetric tensor rank.
  SIAM Journal on Matrix Analysis and Applications.
  2008;\hspace{0pt}30(3):1254--1279.

\bibitem{HillarLim2013}
Hillar~CJ, Lim~LH. Most tensor problems are {NP}-hard. J ACM.
  2013;\hspace{0pt}60(6):Art. 45, 39.
  \urlprefix\url{http://dx.doi.org/10.1145/2512329}.

\bibitem{Hitch1927a}
Hitchcock~FL. The expression of a tensor or a polyadic as a sum of products. J
  Math Phys,. 1927;\hspace{0pt}(6):164--189.

\bibitem{Hitch1927b}
Hitchcock~FL. Multiple invariants and generalized rank of a p-way matrix or
  tensor. J Math Phys,. 1927;\hspace{0pt}(7):39--79.

\bibitem{Tucker1966}
Tucker~LR. Some mathematical notes on three-mode factor analysis. Journal
  Psychometrika. 1966;\hspace{0pt}31(3):279--311.

\bibitem{Tucker1963}
Tucker~LR. Implications of factor analysis of three-way matrices for
  measurement of change. In: Harris~CW, editor. Problems in measuring change.
  University of Wisconsin Press; 1963. p. 122--137.

\bibitem{CarChan1970}
Carroll~JD, Chang~JJ. Analysis of individual differences in multidimensional
  scaling via an n-way generalization of {``Eckart-Young''} decomposition.
  Journal Psychometrika. 1970;\hspace{0pt}35(3):283--319.

\bibitem{Harshman1970}
Harshman~RA. {Foundations of the PARAFAC procedure: Models and conditions for
  an ``explanatory" multimodal factor analysis.} UCLA Working Papers in
  Phonetics. 1970;\hspace{0pt}(16):1--84.

\bibitem{CaPrKru1980}
Carroll~JD, Pruzansky~S, Kruskal~JB. {\sc Candelinc:} a general approach to
  multidimensional analysis of many-way arrays with linear constraints on
  parameters. Journal of Psychometrika. 1980;\hspace{0pt}45(1):3--24.

\bibitem{Knuth69}
Knuth~DE. The art of computer programming. {V}ol. 2: {S}eminumerical
  algorithms. Addison-Wesley Publishing Co., Reading, Mass.-London-Don Mills,
  Ont; 1969.

\bibitem{Strassen69}
Strassen~V. Gaussian elimination is not optimal. Numer Math.
  1969;\hspace{0pt}13:354--356.

\bibitem{Krus1977}
Kruskal~JB. Three-way arrays: rank and uniqueness of trilinear decompositions,
  with application to arithmetic complexity and statistics. Linear Algebra and
  Appl. 1977;\hspace{0pt}18(2):95--138.

\bibitem{How1978}
Howell~TD. Global properties of tensor rank. Linear Algebra Appl.
  1978;\hspace{0pt}22:9--23.

\bibitem{Land2006}
Landsberg~JM. The border rank of the multiplication of {$2\times2$} matrices is
  seven. J Amer Math Soc. 2006;\hspace{0pt}19(2):447--459 (electronic).

\bibitem{Land2008}
Landsberg~JM. Geometry and the complexity of matrix multiplication. Bull Amer
  Math Soc (NS). 2008;\hspace{0pt}45(2):247--284.

\bibitem{AppDav1981}
Appellof~CJ, Davidson~ER. Strategies for analyzing data from video fluorometric
  monitoring of liquid chromatographic effluents. Anal Chem.
  1981;\hspace{0pt}53(13):2053--2056.

\bibitem{Gel1989}
Geladi~P. Analysis of multi-way (multi-mode) data. In: Chemometrics and
  intelligent laboratory systems. (Scandinavian symposium on chemometrics
  (1988); Vol.~7). Revue: PAYS-BAS; 1989. p. 11--30.

\bibitem{SmiBroGel2004}
Smilde~AK, Bro~R, Geladi~P. Multi-way analysis with applications in the
  chemical sciences. John Wiley and Sons.; 2004.

\bibitem{McCullagh87}
McCullagh~P. Tensor methods in statistics. Chapman and Hall; 1987. Monographs
  on Statistics and Applied Probability.

\bibitem{Com1992}
Comon~P. {Independent Component Analysis}. In: Lacoume~JL, editor. {Higher
  Order Statistics}. Amsterdam: Elsevier; 1992. p. 29--38.

\bibitem{SiBroGi2000}
Sidiropoulos~ND, Bro~R, Giannakis~GB. Parallel factor analysis in sensor array
  processing. IEEE Transactions on Signal Processing.
  2000;\hspace{0pt}48(8):2377--2388.

\bibitem{GrCoMoTre2002}
Grellier~O, Comon~P, Mourrain~B, et~al. Analytical blind channel
  identification. IEEE Trans Signal Process. 2002;\hspace{0pt}50(9):2196--2207.

\bibitem{AlFeCoCh2004}
Albera~L, Ferr{\'e}ol~A, Comon~P, et~al. Blind identification of overcomplete
  mixtures of sources ({BIOME}). Linear Algebra Appl.
  2004;\hspace{0pt}391:3--30.

\bibitem{ChaCoMu1993}
Chaumette~E, Comon~P, Muller~D. {ICA-based technique for radiating sources
  estimation: application to airport surveillance}. Radar and Signal
  Processing, IEE Proceedings F. 1993 12;\hspace{0pt}140(6):395--401.

\bibitem{DeLDeMVan2000}
De~Lathauwer~L, De~Moor~B, Vandewalle~J. {Fetal Electrocardiogram Extraction by
  Blind Source Subspace Separation }. IEEE Transactions on Biomedical
  Engineering BME. 2000;\hspace{0pt}47(5):567--572.

\bibitem{DeLa1997}
De~Lathauwer~L. Signal processing based on multilinear algebra [dissertation].
  Katholieke Universiteit, Leuven; 1997.

\bibitem{GiaHea2000}
Giannakis~GB, Heath~RW~Jr. Blind identification of multichannel {FIR} blurs and
  perfect image restoration. IEEE Trans Image Process.
  2000;\hspace{0pt}9(11):1877--1896.

\bibitem{Syl1886}
Sylvester~JJ. Sur une extension d'un th\`eor\`eme de {C}lebsch relatif aux
  courbes du quatri\`eme degr\'e. Comptes Rendus.
  1886;\hspace{0pt}102:1532--1534.

\bibitem{Reich1992}
Reichstein~B. On {W}aring's problem for cubic forms. Linear Algebra Appl.
  1992;\hspace{0pt}160:1--61.

\bibitem{ComMou96}
Comon~P, Mourrain~B. Decomposition of quantics in sums of powers of linear
  forms. Signal Processing. 1996;\hspace{0pt}53(2-3):93--107.

\bibitem{IaKa1999}
Iarrobino~A, Kanev~V. Power sums, {G}orenstein algebras, and determinantal
  loci. (Lecture Notes in Mathematics; Vol. 1721). Berlin: Springer-Verlag;
  1999.

\bibitem{BrCoMoTs2009}
Brachat~J, Comon~P, Mourrain~B, et~al. Symmetric tensor decomposition. Linear
  Algebra Appl. 2010;\hspace{0pt}433(11-12):1851--1872.
  \urlprefix\url{http://dx.doi.org/10.1016/j.laa.2010.06.046}.

\bibitem{LanTeit2010}
Landsberg~JM, Teitler~Z. On the ranks and border ranks of symmetric tensors.
  Found Comput Math. 2010;\hspace{0pt}10(3):339--366.

\bibitem{LansOct2010a}
Landsberg~J, Ottaviani~G. Equations for secant varieties to veronese varieties;
  2010. Preprint, arXiv:1006.0180.

\bibitem{LansOct2010b}
Landsberg~J, Ottaviani~G. Equations for secant varieties via vector bundles;
  2010. Preprint, arXiv:1010.1825.

\bibitem{OedOtta2011}
Oeding~L, Ottaviani~G. Eigenvectors of tensors and algorithms for waring
  decomposition; 2011. Preprint, arXiv:1105.1229.

\bibitem{Strassen83}
Strassen~V. Rank and optimal computation of generic tensors. Linear Algebra
  Appl. 1983;\hspace{0pt}52/53:645--685.

\bibitem{LathMoorVan2004}
De~Lathauwer~L, De~Moor~B, Vandewalle~J. Computation of the canonical
  decomposition by means of a simultaneous generalized {S}chur decomposition.
  SIAM J Matrix Anal Appl. 2004/05;\hspace{0pt}26(2):295--327 (electronic).

\bibitem{Lath2006}
De~Lathauwer~L. A link between the canonical decomposition in multilinear
  algebra and simultaneous matrix diagonalization. SIAM J Matrix Anal Appl.
  2006;\hspace{0pt}28(3):642--666 (electronic).

\bibitem{LanMan2008}
Landsberg~JM, Manivel~L. Generalizations of {S}trassen's equations for secant
  varieties of {S}egre varieties. Comm Algebra.
  2008;\hspace{0pt}36(2):405--422.

\bibitem{BerBraComMou2011}
Bernardi~A, Brachat~J, Comon~P, et~al. General tensor decomposition, moment
  matrices and applications; 2011. Preprint, arXiv:1103.0203.

\bibitem{BerBarComMou2011b}
Bernardi~A, Brachat~J, Comon~P, et~al. Multihomogeneous polynomial
  decomposition using moment matrices. In: I{SSAC} 2011---{P}roceedings of the
  36th {I}nternational {S}ymposium on {S}ymbolic and {A}lgebraic {C}omputation.
  ACM, New York; 2011. p. 35--42.
  \urlprefix\url{http://dx.doi.org/10.1145/1993886.1993898}.

\bibitem{SilvaLim2008}
Silva~Vd, Lim~LH. Tensor rank and the ill-posedness of the best low-rank
  approximation problem. SIAM Journal on Matrix Analysis and Applications.
  2008;\hspace{0pt}30(3):1084--1127.

\bibitem{CurtoFialkow1996}
Curto~RE, Fialkow~LA. Solution of the truncated complex moment problem for flat
  data. Mem Amer Math Soc. 1996;\hspace{0pt}119(568):x+52.
  \urlprefix\url{http://dx.doi.org/10.1090/memo/0568}.

\bibitem{CurtoFialkow1998}
Curto~RE, Fialkow~LA. Flat extensions of positive moment matrices: recursively
  generated relations. Mem Amer Math Soc. 1998;\hspace{0pt}136(648):x+56.
  \urlprefix\url{http://dx.doi.org/10.1090/memo/0648}.

\bibitem{Lasserre2001}
Lasserre~JB. Global optimization with polynomials and the problem of moments.
  SIAM J Optim. 2001;\hspace{0pt}11(3):796--817 (electronic).

\bibitem{Lasserre2010}
Lasserre~JB. Moments, positive polynomials and their applications. (Imperial
  College Press Optimization Series; Vol.~1). Imperial College Press, London;
  2010.

\bibitem{LaLaRo2008}
Lasserre~JB, Laurent~M, Rostalski~P. Semidefinite characterization and
  computation of zero-dimensional real radical ideals. Found Comput Math.
  2008;\hspace{0pt}8(5):607--647.
  \urlprefix\url{http://dx.doi.org/10.1007/s10208-007-9004-y}.

\bibitem{LaLaRo2009}
Lasserre~JB, Laurent~M, Rostalski~P. A unified approach to computing real and
  complex zeros of zero-dimensional ideals. In: Emerging applications of
  algebraic geometry. (IMA Vol. Math. Appl.; Vol. 149). New York: Springer;
  2009. p. 125--155.

\bibitem{Reznick92}
Reznick~B. Sums of even powers of real linear forms. Mem Amer Math Soc.
  1992;\hspace{0pt}96(463):viii+155.
  \urlprefix\url{http://dx.doi.org/10.1090/memo/0463}.

\bibitem{LimComon2010}
Lim~LH, Comon~P. Multiarray signal processing: Tensor decomposition meets
  compressed sensing. Comptes Rendus Mecanique. 2010;\hspace{0pt}338(6):311 --
  320.
  \urlprefix\url{http://www.sciencedirect.com/science/article/pii/S163107211000094X}.

\bibitem{LimComon2009}
Lim~LH, Comon~P. Nonnegative approximations of nonnegative tensors. Journal of
  Chemometrics. 2009;\hspace{0pt}23(7-8):432--441.

\bibitem{FaHiBo2001}
Fazel~M, Hindi~H, Boyd~S. A rank minimization heuristic with application to
  minimum order system approximation. In: Proceedings of the American Control
  Conference; Arlington, VA. IEEE; 2001. p. 4734--4739.

\bibitem{Fazel2002}
Fazel~M. Matrix rank minimization with applications [dissertation]. Stanford
  University, Stanford, CA; 2002.

\bibitem{RechFazParr2010}
Recht~B, Fazel~M, Parrilo~PA. Guaranteed minimum-rank solutions of linear
  matrix equations via nuclear norm minimization. SIAM Rev.
  2010;\hspace{0pt}52(3):471--501.

\bibitem{CandesRecht2009}
Cand{\`e}s~EJ, Recht~B. Exact matrix completion via convex optimization. Found
  Comput Math. 2009;\hspace{0pt}9(6):717--772.

\bibitem{CanTao2010}
Cand{\`e}s~EJ, Tao~T. The power of convex relaxation: near-optimal matrix
  completion. IEEE Trans Inform Theory. 2010;\hspace{0pt}56(5):2053--2080.

\bibitem{NavDeL2009}
Navasca~C, De~Lathauwer~L. Low multilinear tensor approximation via
  semidefinite programming. In: Proc. of the 17th European Signal Processing
  Conference (EUSIPCO 2009); Glasgow, Scotland; 2009. p. 520--524.

\bibitem{LiuMusWonYe2009}
Liu~J, Musialski~P, Wonka~P, et~al. Tensor completion for estimating missing
  values in visual data. In: Proc. of the 12th Int. Conf. Computer Vision
  (Kyoto, 2009); Kyoto; 2009. p. 211--21.

\bibitem{GaReYa2011}
Gandy~S, Recht~B, Yamada~I. Tensor completion and low-{$n$}-rank tensor
  recovery via convex optimization. Inverse Problems.
  2011;\hspace{0pt}27(2):025010--19.

\bibitem{SigDeLSuy2011}
Signoretto~M, De~Lathauwer~L, Suykens~JAK. Nuclear norms for tensors and their
  use for convex multilinear estimation; 2011. Preprint, Internal Report
  10-186, ESAT-SISTA, K.U.Leuven (Leuven, Belgium).

\bibitem{GoldfarbQin2014}
Goldfarb~D, Qin~Z. Robust low-rank tensor recovery: models and algorithms. SIAM
  J Matrix Anal Appl. 2014;\hspace{0pt}35(1):225--253.
  \urlprefix\url{http://dx.doi.org/10.1137/130905010}.

\bibitem{SCPW2012}
Saunderson~J, Chandrasekaran~V, Parrilo~PA, et~al. Diagonal and low-rank matrix
  decompositions, correlation matrices, and ellipsoid fitting. SIAM J Matrix
  Anal Appl. 2012;\hspace{0pt}33(4):1395--1416.
  \urlprefix\url{http://dx.doi.org.prox.lib.ncsu.edu/10.1137/120872516}.

\bibitem{mourrain1996multidimensional}
Mourrain~B, Pan~VY. Multidimensional structured matrices and polynomial
  systems. Calcolo. 1996;\hspace{0pt}33(3):389--401.

\bibitem{BerBraMou2014}
Bernardi~A, Brachat~J, Mourrain~B. A comparison of different notions of ranks
  of symmetric tensors. Linear Algebra Appl. 2014;\hspace{0pt}460:205--230.
  \urlprefix\url{http://dx.doi.org.prox.lib.ncsu.edu/10.1016/j.laa.2014.07.036}.

\bibitem{CurFial2005}
Curto~RE, Fialkow~LA. Truncated {$K$}-moment problems in several variables. J
  Operator Theory. 2005;\hspace{0pt}54(1):189--226.

\bibitem{CurFial1996}
Curto~RE, Fialkow~LA. Solution of the truncated complex moment problem for flat
  data. Mem Amer Math Soc. 1996;\hspace{0pt}119(568):x+52.
  \urlprefix\url{http://dx.doi.org.prox.lib.ncsu.edu/10.1090/memo/0568}.

\bibitem{AlHir1995}
Alexander~J, Hirschowitz~A. Polynomial interpolation in several variables. J
  Algebraic Geom. 1995;\hspace{0pt}4(2):201--222.

\bibitem{Mella2009}
Mella~M. Base loci of linear systems and the {W}aring problem. Proc Amer Math
  Soc. 2009;\hspace{0pt}137(1):91--98.

\bibitem{ChiCil2006}
Chiantini~L, Ciliberto~C. On the concept of {$k$}-secant order of a variety. J
  London Math Soc (2). 2006;\hspace{0pt}73(2):436--454.

\bibitem{vandenberghe1996semidefinite}
Vandenberghe~L, Boyd~S. Semidefinite programming. SIAM review.
  1996;\hspace{0pt}38(1):49--95.

\bibitem{Pardue2010}
Pardue~K. Generic sequences of polynomials. J Algebra.
  2010;\hspace{0pt}324(4):579--590.
  \urlprefix\url{http://dx.doi.org/10.1016/j.jalgebra.2010.04.018}.

\bibitem{Froberg1985}
Fr{\"o}berg~R. An inequality for {H}ilbert series of graded algebras. Math
  Scand. 1985;\hspace{0pt}56(2):117--144.

\bibitem{BFS2004}
Bardet~M, Faug?re~JC, Salvy~B. On the complexity of {G}r\"obner basis
  computation of semi-regular overdetermined algebraic equations. In:
  Proceedings of the {I}nternational {C}onference on {P}olynomial {S}ystem
  {S}olving; 2004. p. 71--75; ICPSS'04.
  \urlprefix\url{http://www-calfor.lip6.fr/ICPSS/papers/43BF/ltx43BF.pdf}.

\bibitem{BFS2013}
Bardet~M, Faug{\'e}re~JC, Salvy~B, et~al. On the complexity of solving
  quadratic {B}oolean systems. J Complexity. 2013;\hspace{0pt}29(1):53--75.
  \urlprefix\url{http://dx.doi.org/10.1016/j.jco.2012.07.001}.

\bibitem{Wat}
Watanabe~J. The {D}ilworth number of {A}rtinian rings and finite posets with
  rank function. Advanced Studies in Pure Mathematics, Commutative Algebra and
  Combinatorics. 1987;\hspace{0pt}11:303--312.

\bibitem{FroOttSha2012}
Fr{\"o}berg~R, Ottaviani~G, Shapiro~B. On the {W}aring problem for polynomial
  rings. Proc Natl Acad Sci USA. 2012;\hspace{0pt}109(15):5600--5602.
  \urlprefix\url{http://dx.doi.org/10.1073/pnas.1120984109}.

\bibitem{ComonOttaviani2009}
Comon~P, Ottaviani~G. On the typical rank of real binary forms. Linear
  Multilinear Algebra. 2012;\hspace{0pt}60(6):657--667.
  \urlprefix\url{http://dx.doi.org/10.1080/03081087.2011.624097}.

\bibitem{blekherman2012nonnegative}
Blekherman~G. Nonnegative polynomials and sums of squares. Journal of the
  American Mathematical Society. 2012;\hspace{0pt}25(3):617--635.

\end{thebibliography}

\def\cprime{$'$} \def\cprime{$'$} \def\cprime{$'$}

\end{document}